\newtheorem{theorem}{Theorem}[section]
\newtheorem{proposition}[theorem]{Proposition}
\newtheorem{corollary}[theorem]{Corollary}
\newtheorem{definition}[theorem]{Definition}
\theoremstyle{remark}
\newtheorem*{remark}{Remark}
\renewcommand{\)}{\textup{)}}
\DeclareMathOperator{\GL}{GL}
\DeclareMathOperator{\SL}{SL}
\DeclareMathOperator{\Frob}{Frob}
\DeclareMathOperator{\Sym}{Sym}
\DeclareMathOperator{\Gal}{Gal}
\DeclareMathOperator{\tr}{tr}
\DeclareMathOperator{\Lie}{Lie}
\newcommand{\cE}{{\mathcal E}}
\newcommand{\cF}{{\mathcal F}}
\newcommand{\frg}{{\mathfrak g}}
\newcommand{\bbA}{{\mathbb A}}
\newcommand{\bbC}{{\mathbb C}}
\newcommand{\bbF}{{\mathbb F}}
\newcommand{\bbG}{{\mathbb G}}
\newcommand{\bbP}{{\mathbb P}}
\newcommand{\bbQ}{{\mathbb Q}}
\newcommand{\bbR}{{\mathbb R}}
\newcommand{\bbZ}{{\mathbb Z}}
\newcommand{\PGL}{{\mathrm{PGL}}}
\title{Equidistribution of Frobenius eigenvalues}
\author{Jack A. Thorne}
\begin{document}

\maketitle
\begin{abstract}
We study the problem of variation of Frobenius eigenvalues on the cohomology of families of local systems of algebraic curves over finite fields.
\end{abstract}

\tableofcontents

\section{Introduction}

Our starting point in this article is the following result of Serre \cite{Ser97}.
\begin{theorem}\label{thm_intro_serre_theorem}
Fix an integer $N \geq 1$ and a prime $p \nmid N$, and consider for each even integer $k \geq 2$ the set $\mathbf{y}_k \subset [ -2 p^{(k-1)/2}, 2 p^{(k-1)/2}]$ of eigenvalues of the operator $T_p$, with multiplicity, on the space $S_k(\Gamma_0(N), \bbC)$ of weight $k$ cusp forms. Let $\mathbf{x}_k = p^{-(k-1)/2} \cdot \mathbf{y}_k \subset [-2, 2]$. Then the sets $\mathbf{x}_k$ become equidistributed as $k \to \infty$ with respect to the measure
\[ \mu_p(f) = \int_{-2}^2 f(x) \frac{p+1}{\pi} \frac{\sqrt{1 - x^2/4}}{(\sqrt{p} + 1/\sqrt{p})^2 - x^2} dx, \, f \in C([0, 1], \bbR).\]
\end{theorem}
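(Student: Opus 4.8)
The plan is to establish the equidistribution by the method of moments. Write $\mu_\infty = \tfrac{1}{\pi}\sqrt{1 - x^2/4}\,dx$ for the Sato--Tate measure on $[-2,2]$, and let $X_n$ ($n \geq 0$) denote the Chebyshev polynomials of the second kind, normalized so that $X_n(2\cos\theta) = \sin((n+1)\theta)/\sin\theta$; then $\{X_n\}_{n\geq 0}$ is both a $\bbC$-basis of $\bbC[x]$ (each $X_n$ is monic of degree $n$) and an orthonormal basis of $L^2([-2,2],\mu_\infty)$. Since the $X_n$ span a uniformly dense subspace of $C([-2,2],\bbR)$ and every measure in sight is supported on the fixed compact interval $[-2,2]$ (this last point for $\mathbf{x}_k$ being exactly the content of Deligne's bound, already built into the statement), it suffices to prove that
\[ \frac{1}{\dim_{\bbC} S_k(\Gamma_0(N),\bbC)} \sum_{x \in \mathbf{x}_k} X_n(x) \;\longrightarrow\; \int_{-2}^2 X_n\, d\mu_p \qquad (k \to \infty) \]
for every fixed $n \geq 0$.

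Next I would identify both sides. For the right-hand side, substituting $x = 2\cos\theta$ and expanding the rational function occurring in $\mu_p$ as a geometric series in $p^{-1}e^{2i\theta}$ gives, after a routine rearrangement of trigonometric sums, the identity $d\mu_p = \bigl(1 + \sum_{m\geq 1} p^{-m} X_{2m}\bigr)\,d\mu_\infty$; orthonormality of the $X_n$ then yields $\int_{-2}^2 X_n\,d\mu_p = p^{-n/2}$ when $n$ is even and $0$ when $n$ is odd. For the left-hand side, I would use the Hecke recursion: since $p \nmid N$, the operators $T_{p^n}$ on $S_k(\Gamma_0(N),\bbC)$ satisfy $T_{p^{n+1}} = T_p\, T_{p^n} - p^{k-1} T_{p^{n-1}}$, which is precisely the three-term recursion for the $X_n$ after rescaling; hence $p^{-n(k-1)/2}\,T_{p^n} = X_n\bigl(p^{-(k-1)/2} T_p\bigr)$ as operators on $S_k(\Gamma_0(N),\bbC)$, and taking traces gives
\[ \sum_{x \in \mathbf{x}_k} X_n(x) = p^{-n(k-1)/2}\,\operatorname{Tr}\bigl(T_{p^n} \mid S_k(\Gamma_0(N),\bbC)\bigr). \]

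The analytic core is then to evaluate $\operatorname{Tr}(T_{p^n} \mid S_k(\Gamma_0(N),\bbC))$ asymptotically in $k$ via the Eichler--Selberg trace formula. The key is that exactly one term of the formula carries a factor linear in $k$, namely the identity (scalar) contribution, which equals $\tfrac{k-1}{12}[\SL_2(\bbZ):\Gamma_0(N)]\,p^{n(k/2-1)}$ when $n$ is even and vanishes when $n$ is odd (as $p^n$ is then not a perfect square). All the remaining contributions — elliptic, hyperbolic, and the parabolic/divisor corrections — are $O_{N,n}\bigl(p^{n(k-1)/2}\bigr)$ uniformly in $k$: an elliptic term attached to an integer $t$ with $t^2 < 4p^n$ is a $k$-independent class-number-and-local factor times $(\rho^{k-1} - \bar\rho^{k-1})/(\rho - \bar\rho)$ with $|\rho| = |\bar\rho| = p^{n/2}$, hence is $O(p^{n(k-1)/2})$, while a hyperbolic term is $\min(d,p^n/d)^{k-1} \leq p^{n(k-1)/2}$ times a bounded factor. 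Dividing by $p^{n(k-1)/2}$ and by $\dim_{\bbC} S_k(\Gamma_0(N),\bbC) = \tfrac{k-1}{12}[\SL_2(\bbZ):\Gamma_0(N)] + O_N(1)$, the identity term contributes $p^{n(k/2-1) - n(k-1)/2}(1+o(1)) = p^{-n/2}(1+o(1))$ when $n$ is even and nothing when $n$ is odd, while every other term is $O_{N,n}(1/(k-1)) \to 0$. This matches the value computed for $\int X_n\,d\mu_p$ in the previous step, giving the required moment convergence and hence the theorem.

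I expect the main obstacle to be bookkeeping rather than conceptual: one must set up the Eichler--Selberg trace formula for $\Gamma_0(N)$ in a form in which the dependence on $k$ is completely explicit, and then verify the dichotomy above — that $\tfrac{k-1}{12}[\SL_2(\bbZ):\Gamma_0(N)]\,p^{n(k/2-1)}$ is the unique term growing linearly in $k$, with everything else bounded by $O_{N,n}(p^{n(k-1)/2})$ with no $k$-factor. Once this uniformity is secured the argument is soft; the only external inputs are Deligne's bound (needed only to confine $\mathbf{x}_k$ to $[-2,2]$) and the trace formula itself.
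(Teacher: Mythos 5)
Your proposal is correct and follows essentially the same route as the paper, which does not reprove Theorem \ref{thm_intro_serre_theorem} but cites Serre \cite{Ser97} and describes precisely this method: asymptotics of the renormalized traces via the Eichler--Selberg trace formula. Your two key identifications --- that the Hecke recursion gives $\sum_{x \in \mathbf{x}_k} X_n(x) = p^{-n(k-1)/2} \tr T_{p^n}$, and that $\int_{-2}^{2} X_n \, d\mu_p$ equals $p^{-n/2}$ for even $n$ and $0$ for odd $n$, matching the surviving identity/square term $\tfrac{k-1}{12}[\SL_2(\bbZ):\Gamma_0(N)]\,p^{n(k/2-1)}$ after normalization --- are exactly the computations underlying Serre's argument, with the remaining work being the uniform-in-$k$ bookkeeping of the elliptic and divisor terms that you correctly flag.
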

The proof of Serre's theorem involves understanding for each $n \geq 1$ the asymptotic (as $k \to \infty$) behaviour of the renormalized traces 
\[ \frac{\tr T_p^n}{p^{-n(k-1)/2} \cdot \dim_\bbC S_k(\Gamma_0(N), \bbC)} \] 
using the Eichler--Selberg trace formula.

As is well-known since Deligne \cite{Del71}, the eigenvalues of the operator $T_p$ can also be studied using algebraic geometry. Suppose that $N \geq 5$, let $Y_1(N)$ denote the modular curve of level $N$ over $\bbF_p$, and let $j : Y_1(N) \hookrightarrow X_1(N)$ denote its usual compactification. Then $X_1(N)$ is a smooth, projective, geometrically connected curve over $\bbF_p$. For each $k \geq 2$ and prime $l \neq p$, there is a lisse $\overline{\bbQ}_l$-sheaf on $Y_1(N)$ given by $\cF = R^1 \pi_\ast \overline{\bbQ}_l$, where $\pi : \cE^\text{univ} \to Y_1(N)$ is the universal elliptic curve. We then have an identity:
\begin{equation}\label{eqn_intro_hecke_trace} \tr (T_p \mid S_k(\Gamma_1(N), \bbC)) = \tr( \Frob_p \mid H^1(X_1(N)_{\overline{\bbF}_p}, j_\ast \Sym^{k-2} \cF )), \end{equation}
and the equidistribution of the (re-normalized) eigenvalues of $T_p$ is essentially equivalent to the equidistribution of the (re-normalized) eigenvalues of $\Frob_p$ as $k \to \infty$. (We remark that the left hand side of (\ref{eqn_intro_hecke_trace}) is an element of $\bbC$, while the right hand side lies in $\bbQ_l$. The identity has a sense because both sides are in fact rational integers.)  It then seems natural to ask if one can generalize Theorem \ref{thm_intro_serre_theorem} to a statement about equidistribution of Frobenius eigenvalues for local systems on arbitrary curves over finite fields.

In this paper, we answer this question in the affirmative for some natural families of local systems. We consider a smooth, projective, geometrically connected curve $X_0$ over $\bbF_q$, together with a Zariski open subset $U_0 \subset X_0$ and a lisse $\overline{\bbQ}_l$-sheaf $\cF_0$ on $U_0$. We assume that the following conditions hold:
\begin{itemize}
\item The pullback $\cF$ of $\cF_0$ to $U = U_{0, \overline{\bbF}_q}$ is irreducible. Equivalently, the representation $\rho : \pi_1(U, \overline{\eta}) \to \GL(\cF_{\overline{\eta}})$ is irreducible. (We write $\overline{\eta}$ for a choice of geometric generic point of $U_0$.)
\item The geometric monodromy group $G$ (i.e.\ the Zariski closure of the image of $\rho$) and the arithmetic monodromy group $G_0$ (i.e.\ the Zariski closure of the image of $\rho_0 : \pi_1(U_0, \overline{\eta}) \to \GL(\cF_{0,\overline{\eta}})$) are equal, and connected. (By a result of Grothendieck, $G = G_0$ is then a semi-simple algebraic group.) 
\end{itemize}
Finally, we fix an isomorphism $\iota : \overline{\bbQ}_l \cong \bbC$. For each irreducible algebraic representation $\xi$ of the semi-simple group $G$, we can define a new lisse sheaf $\cF_{0, \xi}$ on $U_0$, which is associated to the representation $\xi \circ \rho_0$ of $\pi_1(U_0, \overline{\eta})$. The semi-simplicity of $G_0$ implies that the sheaf $\cF_{0, \xi}$ is $\iota$-pure of weight $0$.

It then follows from Deligne's proof of the Weil conjectures that for each eigenvalue $\gamma$ of $\Frob_q$ on $H^1(X, j_\ast \cF_\xi)$, we have $| \iota(\gamma) |^2 = q$. We can therefore consider the set $\mathbf{y}_\xi \subset S^1$ of $q^{-1/2}$-multiples of the eigenvalues $\iota(\gamma)$ of $\Frob_q$, taken with multiplicity, and ask whether they satisfy any kind of equidistribution property. We prove the following result in this direction.
\begin{theorem}[Theorem \ref{thm_equidistribution_general_case}]\label{thm_intro_equidistribution_theorem}
Suppose that $G \neq 1$, and let $\xi_1, \xi_2, \dots$ be a sequence of irreducible algebraic representations of $G$ of fixed central character $\omega$ such that $\xi_i \to \infty$ as $i \to \infty$. \(This means that the highest weight of $\xi_i$ tends to infinity `far from the walls'; see Definition \ref{defn_representations_tend_to_infinity} below.\) Suppose that the genus of $X$ is at least 2. Then there exists a measure $\nu$ on $S^1$ such that the sets $\mathbf{y}_{\xi_i}$ become $\nu$-equidistributed as $i \to \infty$.
\end{theorem}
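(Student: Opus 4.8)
I plan to argue via the Weyl equidistribution criterion. It suffices to show that for every integer $n$ the moment $M_n(\xi_i) := |\mathbf{y}_{\xi_i}|^{-1}\sum_{\alpha \in \mathbf{y}_{\xi_i}} \alpha^n$ converges as $i \to \infty$: since each $\mathbf{y}_{\xi_i}$ defines a probability measure on the compact group $S^1$, the existence of all limits $M_n$ forces — by weak-$*$ compactness and uniqueness of Fourier coefficients — these measures to converge weak-$*$ to a probability measure $\nu$ with $\int_{S^1}\alpha^n\,d\nu(\alpha) = M_n$, and then $\mathbf{y}_{\xi_i}$ is $\nu$-equidistributed. As $\mathbf{y}_{\xi_i}\subset S^1$ one has $M_0(\xi_i)=1$ and $M_{-n}(\xi_i)=\overline{M_n(\xi_i)}$, so only $n\geq 1$ needs attention. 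For $n\geq 1$ one has $\sum_{\alpha\in\mathbf{y}_{\xi_i}}\alpha^n = q^{-n/2}\,\iota\bigl(\tr(\Frob_q^n\mid H^1(X,j_\ast\cF_{\xi_i}))\bigr)$; and since $\cF_{\xi_i}$ is geometrically irreducible (its monodromy image is Zariski dense in $G$ and $\xi_i$ is irreducible) and nontrivial for $i\gg 0$, we have $H^0=H^2=0$, so the Grothendieck--Lefschetz trace formula on $X$ over $\bbF_{q^n}$ gives
\[ -\tr(\Frob_q^n\mid H^1(X,j_\ast\cF_{\xi_i})) = \sum_{x\in U_0(\bbF_{q^n})}\tr\xi_i(\theta_x)\;+\;\sum_{x\in(X_0\setminus U_0)(\bbF_{q^n})}\tr\bigl(\Frob_x\mid (j_\ast\cF_{\xi_i})_{\overline x}\bigr), \]
where $\theta_x$ denotes the Frobenius conjugacy class at $x$ (a semisimple class in $G$) and $(j_\ast\cF_{\xi_i})_{\overline x}=\cF_{\xi_i,\overline\eta}^{I_x}$ with its residual $\Frob_x$-action. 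Thus the task reduces to computing, for each fixed $n$, the $i\to\infty$ limit of this numerator and of the denominator $|\mathbf{y}_{\xi_i}|=\dim H^1(X,j_\ast\cF_{\xi_i})$, both normalised by $\dim\xi_i$.

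The technical core is a character estimate: for $G$ connected semisimple, $g\in G$ semisimple with $\iota(g)$ contained in a maximal compact subgroup of $G(\bbC)$, and $\xi_i\to\infty$ far from the walls with central character $\omega$, one has $\iota(\tr\xi_i(g))/\dim\xi_i\to\omega(g)$ if $g\in Z(G)$ and $\to 0$ otherwise. For $g$ regular this is immediate from the Weyl character formula (numerator bounded by $|W|$, denominator a fixed nonzero constant); for $g$ singular and non-central one reduces to the regular case inside the proper Levi $Z_G(g)$, and it is precisely here that ``far from the walls'' is needed, to ensure that $\dim\xi_i$ grows strictly faster than the relevant Levi character (of degree $\#\Phi^+_{Z_G(g)}<\#\Phi^+_G$ in the highest weight). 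This has two companions, proved the same way: for a closed subgroup $H\subseteq G$, $\dim\xi_i^H/\dim\xi_i\to 0$ if $H^\circ\neq 1$ (reduce to $H^\circ\supseteq\bbG_m$ or $\bbG_a$ and estimate a zero-weight multiplicity, again using that the weights of $\xi_i$ spread out far from the walls) and $\to |H|^{-1}\sum_{h\in H\cap Z(G)}\omega(h)$ if $H$ is finite; and for a finite $Q\subseteq G$ and an irreducible $W$ of $Q$, the multiplicity $[\xi_i|_Q:W]/\dim\xi_i$ converges.

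These inputs settle the interior sum and the denominator. Each $\theta_x$, $x\in U_0(\bbF_{q^n})$, is $\iota$-unitary because $\cF_0$ is $\iota$-pure of weight $0$, so termwise $\iota(\tr\xi_i(\theta_x))/\dim\xi_i$ converges, and summing over the finite set $U_0(\bbF_{q^n})$ gives the limit $\sum_{x\,:\,\theta_x\in Z(G)}\omega(\theta_x)$. By Grothendieck--Ogg--Shafarevich, $\dim H^1(X,j_\ast\cF_{\xi_i})=(2g-2+s)\dim\xi_i+\sum_x\mathrm{Sw}_x(\cF_{\xi_i})-\sum_x\dim\cF_{\xi_i}^{I_x}$ with $s=\#(X_0\setminus U_0)$; after dividing by $\dim\xi_i$ the leading coefficient is the constant $2g-2+s$, the invariant term converges by the first companion applied to the Zariski closure of the image of $I_x$, and the Swan term converges because it depends only on the restriction to wild inertia, whose image $Q_x$ in $G$ is finite (so $\mathrm{Sw}_x(\cF_{\xi_i})=\sum_W[\xi_i|_{Q_x}:W]\,\mathrm{Sw}(W)$ with multiplicities controlled by the second companion). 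This is where the hypothesis $g\geq 2$ enters: then $\dim H^1(X,j_\ast\cF_{\xi_i})\geq(2g-2)\dim\xi_i\to\infty$, so $\mathbf{y}_{\xi_i}\neq\emptyset$ and the denominator grows linearly in $\dim\xi_i$ with positive leading coefficient.

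What remains — and this I expect to be the main obstacle — is the boundary sum $\sum_{x}\tr(\Frob_x\mid(j_\ast\cF_{\xi_i})_{\overline x})$. The difficulty is that, although $(j_\ast\cF_{\xi_i})_{\overline x}=\cF_{\xi_i}^{I_x}$ is $\iota$-mixed of weight $\leq 0$ by Weil II (so its $\Frob_x$-eigenvalues have $\iota$-absolute value $\leq 1$), the Frobenius element itself is not $\iota$-unitary in general, and $\tr\xi_i$ at a non-unitary semisimple element grows exponentially, so the character estimate does not apply directly. The resolution I envisage is to pass to the $\iota$-weight filtration: on $\gr^W_0(j_\ast\cF_{\xi_i})_{\overline x}$ the Frobenius is $\iota$-unitary, and unwinding the local structure (Grothendieck quasi-unipotence: $I_x$ has finite wild image and a tame quotient generated by one element whose unipotent part is the one-parameter group $\exp(tN)$) lets one express $\tr(\Frob_x\mid\gr^W_0\cF_{\xi_i}^{I_x})$ as an average $|Q|^{-1}\sum_{h\in Q}\tr\xi_i(\text{an }\iota\text{-unitary element depending on }h)$ over a suitable finite $Q$, whose limit the character estimate evaluates; the negative-weight graded pieces are cut out by $N$, hence lie in the invariants of the nontrivial unipotent group $\exp(tN)$ and have dimension $o(\dim\xi_i)$ by the first companion, so contribute $0$. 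Carrying out this local analysis — giving a clean description of the $\iota$-weight-$0$ part of the local monodromy representation and of its residual Frobenius — is the step I expect to require the most care. Granting it, every $M_n(\xi_i)$ ($n\geq 1$) converges to an explicit quantity built from the finitely many central Frobenius classes over $\bbF_{q^n}$ and the local monodromy data at $X_0\setminus U_0$; combined with $M_0=1$ and $M_{-n}=\overline{M_n}$, all Fourier coefficients of the counting measures on $\mathbf{y}_{\xi_i}$ converge, and the theorem follows as in the first paragraph, with $\nu$ the probability measure on $S^1$ whose Fourier coefficients are these limits.
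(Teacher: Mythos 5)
Your proposal is correct in substance and follows the same skeleton as the paper's proof: convergence of moments implies existence of the measure (Proposition \ref{prop_existence_of_limiting_measure}), the Lefschetz formula for $H^1(X, j_\ast \cF_{\xi_i})$ with $H^0=H^2=0$ (Proposition \ref{prop_lefschetz}), an Euler characteristic formula to control the denominator $h^1(X,j_\ast\cF_{\xi_i})/\dim\xi_i$ (with genus $\geq 2$ ensuring the limit is positive, exactly as you use it), and asymptotic character estimates applied term by term. The differences are in the auxiliary inputs: the paper quotes Chenevier--Clozel (Proposition \ref{prop_chenevier_clozel}) for the estimate $\iota\tr\xi_i(g)/\dim\xi_i \to 0$ at non-central elements with unitary semisimple part, rather than your Weyl-character-formula-plus-Levi sketch; it proves the invariant-dimension estimate via Jacobson--Morozov, restriction to $\mathrm{SU}_2(\bbR)$ and dominated convergence (Proposition \ref{prop_jacobson_morozov}), which is the same content as your zero-weight-multiplicity companion; and it uses Raynaud's Euler characteristic formula with the Artin representation $a_{G_x}$ (Proposition \ref{prop_raynaud}) in place of your Grothendieck--Ogg--Shafarevich/Swan bookkeeping, the two being equivalent. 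The step you flag as the main obstacle, the boundary sum, is handled in the paper without any graded analysis, by the dichotomy you nearly state: if $N_x \neq 0$ then (for $i$ large, the highest weight being regular) $d\xi_i(N_x)\neq 0$ and the entire stalk $(j_\ast\cF_{\xi_i})_{\overline{x}} = \cF_{\xi_i}^{I_x}$ lies in $\ker d\xi_i(N_x)$, so the Weil II bound that its Frobenius eigenvalues have $\iota$-absolute value $\leq 1$ together with Proposition \ref{prop_jacobson_morozov} makes the whole term $o(\dim\xi_i)$ --- there is no need to isolate negative-weight pieces, since the full $I_x$-invariant space already sits in the kernel of the unipotent; if $N_x = 0$ the inertia image is finite, the trace on invariants is the average $|G_x|^{-1}\sum_{\sigma}\tr\xi_i(\rho_0(\phi_{q,x}\sigma))$, and weight--monodromy for curves (Weil II) shows these elements have unitary semisimple part, so the character estimate yields precisely the finite central average you predict. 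Thus the step you leave as ``granting it'' is genuinely available from the local monodromy theorem and, once filled in this way, your argument coincides with the paper's, giving the same limiting moments and the same measure.
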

In fact, we describe the measure $\nu$ explicitly by calculating its moments. If $X$ is a modular curve that carries a universal family of elliptic curves, then we essentially recover Serre's Theorem \ref{thm_intro_serre_theorem}. We discuss more general families of elliptic curves in \S \ref{sec_examples} below.

It is interesting to compare Theorem \ref{thm_intro_equidistribution_theorem} with Serre's \cite[Th\'eor\`eme 8]{Ser97}. Maintaining the analogy with modular forms, the theorem of \emph{loc. cit.} describes equidistribution of Frobenius eigenvalues in the level aspect (i.e.\ eigenvalues in $H^1(X_i, \overline{\bbQ}_l)$, as $X_{i, 0}$ ranges through a sequence of curves over over $\bbF_q$), while our theorem describes equidistribution of Frobenius eigenvalues in the weight aspect.

We now discuss the organization of this paper. In \S \ref{sec_equidistribution_on_the_circle}, we make precise our notion of equidistribution with respect to a measure. In \S \ref{sec_elements_of_compact_lie_groups}, we recall a result about the asymptotic behaviour of expressions like $\tr \xi(\gamma) / \dim \xi$, where $\gamma$ is an element of a compact Lie group $K$ and $\xi$ is a sequence of representations of $K$ that `tend to infinity'. This is the main technical input in our proof of Theorem \ref{thm_intro_equidistribution_theorem}. In \S \ref{sec_frobenius_eigenvalues}, we prove Theorem \ref{thm_intro_equidistribution_theorem}, essentially by direct calculation with the Lefschetz trace formula. Finally, in \S \ref{sec_examples} we discuss examples arising from families of elliptic curves, and in \S \ref{sec_examples_2} we discuss a family of examples arising from Deligne's Kloosterman sheaves.
\subsection{Funding}

This research was partially conducted during the period the author served as a Clay Research Fellow.

\subsection{Acknowledgments}

We thank the referee for several useful suggestions that improved the exposition in this paper.

\subsection{Notation} If $G$ is a compact Lie group, and $f, g$ are continuous complex-valued functions on $G$, then we define $\langle f, g \rangle_G = \int f \overline{g} \,d\mu_G$, where $d\mu_G$ is the probability Haar measure on $G$. 

\section{Equidistribution in $S^1$}\label{sec_equidistribution_on_the_circle}

Let $S^1$ denote the unit circle in $\bbC^\times$. Suppose given for each $i \geq 1$ a finite multiset $\mathbf{x}_i = \{ x_{i, j} \}$ of points of $S^1$. In this section, we define what it means for the sets $\mathbf{x}_i$ to become equidistributed in $S^1$ as $i \to \infty$. 

We write $C(S^1, \bbR)$ for the space of continuous, real-valued functions on $S^1$, endowed with its supremum norm. Similarly, we write $C(S^1, \bbC)$ for the space of continuous, complex-valued functions on $S^1$, endowed with its supremum norm. By definition, a measure $\nu$ on $S^1$ is a linear functional $\nu : C(S^1, \bbR) \to \bbR$ such that $\nu(f) \geq 0$ if $f \geq 0$ and $\nu(1) = 1$ (see \cite[Ch. III]{Bou04}). A measure $\nu$ being given, we extend its domain of definition to $C(S^1, \bbC)$ in the obvious manner.
\begin{definition}
We say that the sets $\mathbf{x}_i$ become $\nu$-equidistributed in $S^1$ as $i \to \infty$ if for all $f \in C(S^1, \bbC)$, we have
\[ \lim_{i \to \infty} \frac{ \sum_j f(x_{i, j}) }{\# \mathbf{x}_i} = \nu(f). \]
\end{definition}
\begin{proposition}\label{prop_existence_of_limiting_measure}
The following are equivalent:
\begin{enumerate}
\item For each $n \geq 1$, the limit 
\[ \omega_n = \lim_{i \to \infty} \frac{ \sum_j x_{i, j}^n }{\# \mathbf{x}_i} \]
exists.
\item There exists a measure $\nu$ such that the sets $\mathbf{x}_i$ become $\nu$-equidistributed as $i \to \infty$.
\end{enumerate}
In this case, the measure $\nu$ is uniquely determined.
\end{proposition}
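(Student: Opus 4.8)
The plan is to restate the problem measure-theoretically. For each $i$, let $\nu_i$ denote the probability measure on $S^1$ defined by $\nu_i(f) = \frac{1}{\# \mathbf{x}_i} \sum_j f(x_{i,j})$ for $f \in C(S^1, \bbC)$; thus $\nu_i$ is a measure in the sense defined above, and by definition the sets $\mathbf{x}_i$ become $\nu$-equidistributed precisely when $\nu_i(f) \to \nu(f)$ for every $f \in C(S^1, \bbC)$, i.e.\ when $\nu_i \to \nu$ in the weak-$\ast$ topology. The implication (ii) $\Rightarrow$ (i) is then immediate: one applies the definition of weak-$\ast$ convergence to the functions $f(x) = x^n$, which are continuous on $S^1$, to see that $\omega_n$ exists and equals $\nu(x^n)$.

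For (i) $\Rightarrow$ (ii), the key observation is that, because every $x_{i,j}$ lies on $S^1$, we have $\overline{x_{i,j}^n} = x_{i,j}^{-n}$, so $\nu_i(x^{-n}) = \overline{\nu_i(x^n)}$; combined with $\nu_i(x^0) = 1$ and hypothesis (i), this shows that $\lim_i \nu_i(P)$ exists for every trigonometric polynomial $P = \sum_{n \in \bbZ} c_n x^n$. Denote this limit by $L(P)$. Since the $\nu_i$ are probability measures we have $| \nu_i(P) | \leq \| P \|_\infty$, hence $| L(P) | \leq \| P \|_\infty$, so $L$ is a bounded linear functional on the subspace of trigonometric polynomials, which is dense in $C(S^1, \bbC)$ by the Stone--Weierstrass theorem. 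Therefore $L$ extends uniquely to a bounded linear functional $\nu$ on $C(S^1, \bbC)$, and a standard approximation argument shows that $\nu_i(f) \to \nu(f)$ for all $f \in C(S^1, \bbC)$. It remains only to check that $\nu$ is a measure in the sense above, i.e.\ that $\nu(1) = 1$ (clear, since $\nu_i(1) = 1$ for all $i$) and that $\nu(f) \geq 0$ whenever $f \geq 0$; this follows because $\nu(f) = \lim_i \nu_i(f)$ is a limit of nonnegative reals. Alternatively, one can argue more slickly: the set of probability measures on the compact metrizable space $S^1$ is weak-$\ast$ compact and metrizable, every weak-$\ast$ limit point of $(\nu_i)$ must agree with $L$ on trigonometric polynomials and hence, by density, with any other limit point, so $(\nu_i)$ converges weak-$\ast$ to a single probability measure.

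Uniqueness of $\nu$ is then clear: if $\nu$ and $\nu'$ both have the equidistribution property then $\nu(f) = \lim_i \nu_i(f) = \nu'(f)$ for every $f$; equivalently, $\nu$ is already determined on the dense subspace of trigonometric polynomials by the requirement $\nu(x^n) = \omega_n$ for $n \geq 1$ (together with $\nu(1) = 1$ and $\nu(x^{-n}) = \overline{\omega_n}$). The whole argument is routine functional analysis, and I do not expect a real obstacle; the only points that need a little care are the reduction from the one-sided family of moments $(\omega_n)_{n \geq 1}$ to all Laurent exponents using $\bar x = x^{-1}$ on $S^1$, and the appeal to Stone--Weierstrass (or, equivalently, weak-$\ast$ compactness of the space of probability measures) to pass from a dense subspace to all of $C(S^1, \bbC)$ while retaining positivity of the limiting functional.
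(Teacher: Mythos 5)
Your proof is correct and takes essentially the same route as the paper: the moments define a linear functional on the dense (Weierstrass) subspace of trigonometric polynomials, which is then extended to all of $C(S^1,\bbC)$ and shown to be a measure --- the paper delegates the extension-with-positivity step to a Bourbaki theorem on positive linear forms, while you do it by hand via the bound $|L(P)| \leq \|P\|_\infty$, a $3\varepsilon$ approximation argument, and positivity of the limit. Your explicit reduction of the $n \leq 0$ moments to the given $\omega_n$ using $\overline{x} = x^{-1}$ on $S^1$ is a small point the paper leaves implicit.
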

\begin{proof}
The implication $(ii) \Rightarrow (i)$ is immediate from the definition. In the other direction, we let $V \subset C(S^1, \bbR)$ denote the subspace spanned by real-valued trigonometric polynomials (i.e.\ finite linear combinations of functions $z^k$, $k \in \bbZ$). By the Weierstrass approximation theorem, $V$ is a dense subspace of $C(S^1, \bbR)$. By assumption, we can define a positive linear functional $\nu : V \to \bbR$ by the formula
\[ \nu(f) = \lim_{i \to \infty} \frac{ \sum_j f(x_{i, j}) }{\# \mathbf{x}_i}. \]
By \cite[Ch. III, \S 1, No. 7, Proposition 9]{Bou04}, $\nu$ admits a unique extension to a positive linear functional $\nu : C(S^1, \bbR) \to \bbR$. This is the desired measure. 
\end{proof}
A pleasant situation occurs when the values $\omega_k$ satisfy an estimate of the form $| \omega_k | \leq C e^{- \alpha k}$ for some constants $C, \alpha > 0$. In this case the Fourier series $\sum_{k \in \bbZ} \omega_k z^{-k}$ converges uniformly on $S^1$ to a real analytic function $F \in C(S^1, \bbR)$, and the measure $\nu$ can be given by the formula
\[ \nu(f) = \int_{S^1} f F \,d \mu, \]
where $\mu$ is the usual Lebesgue probability measure on $S^1$.

We now consider an example. Let $q > 1$, and define
\begin{equation}\label{eqn_definition_of_plancherel_measure} F(z) = 1 + \frac{q-1}{2} \sum_{n \neq 0} q^{-|n|} z^{2n} = 1 + (q-1) \sum_{n > 0} q^{-n} \cos 2 n \theta, 
\end{equation}
for $z = e^{i \theta} \in S^1$. The measure $F d \mu$ is invariant under the substitution $z \mapsto z^{-1}$. We can identify the quotient of $S^1$ by this substitution with $\Omega = [-2, 2]$ (via the map $z \mapsto z + z^{-1}$), and the pushforward of the measure $F d \mu$ to $\Omega$ is exactly the measure denoted $\mu_q$ in \cite[\S 2]{Ser97}. If $q$ is a prime power, and $K$ is a non-archimedean local field with residue field of order $q$, then this pushforward can be identified with Plancherel measure for the unramified principal series of $\PGL_2(K)$ (see \emph{loc. cit.}). The function $F$ will appear again in \S \ref{sec_examples} below.

\section{Elements of compact Lie groups}\label{sec_elements_of_compact_lie_groups}

Let $\Omega$ be a field equipped with an isomorphism $\iota : \Omega \cong \bbC$. Let $G$ be a reductive algebraic group over $\Omega$ (in particular, connected). Let $K \subset G(\bbC)$ be a maximal compact subgroup.
\begin{proposition} Let $\mathrm{Rep}_K$ denote the category of continuous representations of $K$ on finite-dimensional $\bbC$-vector spaces. Let $\mathrm{Rep}_G$ denote the category of algebraic representations of $G$ on finite-dimensional $\Omega$-vector spaces.
\begin{enumerate}
\item The natural functor $\mathrm{Rep}_G \to \mathrm{Rep}_K$ given by restriction to $K \subset G(\bbC) \cong G(\Omega)$ is an equivalence of categories.
\item Let $g, h \in K$. Then $g, h$ are $K$-conjugate if and only if they are $G(\Omega)$-conjugate.
\item Let $g \in G(\Omega)$, and let $V$ be a faithful representation of $G$. Suppose that the eigenvalues of $\iota g$ in $\iota V$ have complex absolute value $1$. Let $g = g_s g_u$ be the Jordan decomposition of $g$. Then $g_s$ is $G(\Omega)$-conjugate to an element of $K$, unique up to $K$-conjugacy.
\end{enumerate}
\end{proposition}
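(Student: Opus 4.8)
The plan is to treat the three parts as a package built around the theory of maximal compact subgroups of complex reductive groups and the standard comparison between algebraic and continuous representations.

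For part (i), I would first reduce to the case where $G$ is connected reductive over $\bbC$ itself (using $\iota$ to identify $\Omega\cong\bbC$, which is harmless since algebraic representations are defined by polynomial matrix entries and $\iota$ is a field isomorphism). The functor is clearly additive and exact, so it suffices to check it is fully faithful and essentially surjective. Fully faithfulness: a morphism of $K$-representations $\phi\colon V\to W$ is automatically a morphism of Lie algebra representations for $\Lie(K)$, hence for its complexification $\Lie(K)\otimes_\bbR\bbC=\Lie(G)$, and since $G$ is connected this forces $\phi$ to be $G$-equivariant; conversely a $G$-morphism restricts to a $K$-morphism, and these constructions are mutually inverse because $K$ is Zariski-dense in $G$ (by connectedness of $G$ and the fact that the complexification of a compact real form is Zariski-dense). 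Essential surjectivity: given a continuous $K$-representation, use Weyl's unitary trick / the fact that $K$ is a compact real form to show the representation is the restriction of an algebraic one — concretely, differentiate to get a $\Lie(K)$-representation, complexify to a $\Lie(G)$-representation, integrate over the simply connected cover and check the relevant kernel condition, or simply cite the standard equivalence (e.g.\ for $G$ semisimple this is classical, and the reductive case follows by treating the central torus separately since a continuous character of a compact torus $(S^1)^r$ extends algebraically to $(\bbC^\times)^r$). The main obstacle here is being careful about the reductive (non-semisimple) case and about the possibility that $K$ meets several components — but $G$ is assumed connected, so this is manageable.

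For part (ii), the ``if'' direction is trivial. For ``only if'', suppose $g,h\in K$ are $G(\bbC)$-conjugate, say $g=xhx^{-1}$ with $x\in G(\bbC)$. Both $g$ and $h$ lie in maximal tori of $K$; after conjugating within $K$ I may assume $g,h$ lie in a single fixed maximal torus $T_K$ of $K$, whose complexification $T=T_K\cdot T_K^{\,i}$ (the Zariski closure) is a maximal torus of $G$. Now $g$ and $h$ are two elements of $T(\bbC)$ that are $G(\bbC)$-conjugate, hence conjugate under the Weyl group $W=N_G(T)/T$ — and here is the key point, $W$ is also the Weyl group $N_K(T_K)/T_K$ of the compact group, realized by elements of $K$. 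Hence $g$ and $h$ are $K$-conjugate. I would state the two facts used (that $W(G,T)=W(K,T_K)$, and that two semisimple elements in a common maximal torus which are $G$-conjugate are $W$-conjugate) as standard; the latter is a well-known consequence of the conjugacy of maximal tori in $G$ applied to the identity components of centralizers, or can be cited from e.g.\ Humphreys/Borel.

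For part (iii), the strategy is: first handle $g_s$, then feed it into part (ii) for the uniqueness. Since $g=g_sg_u$ is the Jordan decomposition and $V$ is faithful, $\iota g_s$ and $\iota g_u$ are the semisimple and unipotent parts of $\iota g$ acting on $\iota V$; the eigenvalues of $\iota g$ equal those of $\iota g_s$, so by hypothesis $\iota g_s$ has all eigenvalues of absolute value $1$. Thus $\iota g_s$ generates a bounded (equivalently, relatively compact) subgroup of $\GL(\iota V)$: the closure of $\langle \iota g_s\rangle$ is a commutative compact group. More intrinsically, $g_s$ lies in a maximal torus $T$ of $G$, and the condition on eigenvalues says the corresponding character lattice evaluation lands in $S^1$ for the faithful representation, hence (since $V$ is faithful, the weights span a finite-index sublattice of $X^*(T)$) $g_s$ lies in the maximal compact subgroup $T_K=T(\bbC)_{|\cdot|=1}$ of $T(\bbC)$ — up to first conjugating so that $T_K\subset K$. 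Concretely: choose any maximal torus $T\ni g_s$, conjugate $T$ into $G(\bbC)$ so that its maximal compact subgroup lies in $K$ (all maximal compacts of $G(\bbC)$ are conjugate, and $T(\bbC)_{|\cdot|=1}$ is contained in one of them), and then the eigenvalue hypothesis combined with faithfulness of $V$ forces $g_s$ into $T(\bbC)_{|\cdot|=1}\subset K$. Uniqueness up to $K$-conjugacy is then exactly part (ii) applied to the two elements of $K$ obtained from two such choices. I expect part (iii) to be the main obstacle: the delicate point is arguing cleanly that ``all eigenvalues of $\iota g_s$ on a faithful $V$ have absolute value $1$'' is equivalent to ``$g_s$ lies in the maximal compact subgroup of some (hence any containing) maximal torus'', which uses that the weights of a faithful representation span a finite-index subgroup of the character lattice so that the absolute-value-$1$ condition is detected exactly on the compact torus.
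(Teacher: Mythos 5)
Your proposal is correct, and its overall architecture (prove (i) and (ii) first, then get the uniqueness in (iii) from (ii)) matches the paper's; the difference is in how (ii) is proved and in how much detail (i) and (iii) are given. The paper disposes of (i) as well-known and then deduces (ii) from it by a character-theoretic argument: elements of $K$ are semisimple, semisimple conjugacy classes of a reductive group are separated by the characters of algebraic representations, and by (i) these restrict to a separating family of characters of $K$-representations, so Peter--Weyl-type separation on the compact side finishes it. You instead prove (ii) by putting $g,h$ in a common maximal torus $T_K$, using that $G(\bbC)$-conjugate semisimple elements of a maximal torus are $W(G,T)$-conjugate, and that $W(G,T)=W(K,T_K)$ is realized inside $K$; this is a perfectly standard alternative, somewhat longer but more explicit about where the conjugating element lives, whereas the paper's route is shorter and leans on (i). For (iii) you and the paper share the same key idea --- the eigenvalue hypothesis on a faithful representation forces $\iota(g_s)$ into a compact subgroup of $G(\bbC)$, hence into a conjugate of $K$, with uniqueness from (ii) --- the paper phrases this via the compactness of the closure of $\langle \iota(g_s)\rangle$, while you run the equivalent weight-lattice argument placing $g_s$ in the compact form $T(\bbC)_{|\cdot|=1}$ of a maximal torus (your ``finite index'' claim for the weights of a faithful representation is correct, and is all that is needed). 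Your sketch of (i) via the unitary trick and Zariski density of $K$ is the standard proof of the fact the paper simply cites.
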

Compare \cite[\S 2.2]{Del80}.
\begin{proof}
The first part of the proposition is well-known. It implies the second part, since elements of $K$ are semi-simple, and semi-simple conjugacy classes of a reductive algebraic group are separated by characters. For the third part, we observe that the given hypothesis implies that $\iota(g_s)$ is contained in a compact subgroup of $G(\bbC)$, hence in a $G(\bbC)$-conjugate of $K$. This implies that $g_s$ is $G(\Omega)$-conjugate to $K$. The uniqueness of its $K$-conjugacy class is exactly the second part of the proposition.
\end{proof}
\begin{definition}\label{defn_representations_tend_to_infinity}
Let $\xi_1, \xi_2, \dots$ be a sequence of irreducible representations of $G$. Fix a maximal torus and Borel subgroup $T \subset B \subset G$, and let $S \subset \Phi(G, T) \subset X^\ast(T)$ be the corresponding set of simple roots. Corresponding to each $\xi_i$ is a $B$-dominant weight $\lambda_i \in X^\ast(T)$ \(i.e.\ satisfying $\langle \lambda_i, \alpha^\vee \rangle \geq 0$ for each $\alpha \in S$\). We say that $\xi_i$ `tends to infinity' as $i \to \infty$, and write $\xi_i \to \infty$, if for each $\alpha \in S$, we have $\langle \lambda_i, \alpha^\vee \rangle \to \infty$ as $i \to \infty$.
\end{definition}
It is clear that this definition is independent of the choice of $T$ and $B$. 
\begin{proposition}\label{prop_chenevier_clozel}
Let $\xi_1, \xi_2, \dots$ be a sequence of irreducible representations of $G$ such that $\xi_i \to \infty$ as $i \to \infty$. Let $g \in G(\Omega)$ be an element with semi-simple part $g_s$ conjugate into $K$. Suppose that $g_s$ does not lie in the center $Z_G$ of $G$. Then we have
\[ \lim_{i \to \infty} \frac{\iota\tr \xi_i(g)}{\dim_\Omega \xi_i} = 0. \]
\end{proposition}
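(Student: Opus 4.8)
The plan is to reduce the statement to a purely Lie-theoretic fact about characters of compact Lie groups, and then invoke the Weyl character formula together with a counting estimate. First I would replace $g$ by its semisimple part: since $\tr\xi_i(g) = \tr\xi_i(g_s)$ for every algebraic representation $\xi_i$ (the unipotent part $g_u$ acts with all eigenvalues $1$ on the weight spaces, but more to the point $\xi_i(g_u)$ is unipotent so it does not change the trace... actually the clean statement is that $\xi_i(g)$ and $\xi_i(g_s)$ have the same semisimple part, hence the same trace), we may assume $g = g_s$, and by hypothesis $g$ is conjugate into $K$. After conjugating we may assume $g \in K$. So the claim becomes: for $g \in K$ with $g \notin Z_G$, one has $\iota\tr\xi_i(g)/\dim\xi_i \to 0$. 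Here I would use Part (i) of the preceding Proposition to regard each $\xi_i$ as a continuous irreducible representation of $K$; the notions of highest weight and "$\xi_i \to \infty$" transfer between $G$ and $K$ since $K$ is Zariski-dense in $G(\bbC)$ and shares the same root datum.

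Next I would apply the Weyl character formula. Writing $T_K \subset K$ for a maximal torus containing (a conjugate of) the semisimple element $g$—we may assume $g \in T_K$—and letting $\rho$ denote the half-sum of positive roots, the character value is
\[ \tr\xi_i(g) = \frac{\sum_{w \in W} \det(w)\, (\lambda_i + \rho)(w\text{-twisted evaluation at }g)}{\prod_{\alpha > 0}(\alpha^{1/2}(g) - \alpha^{-1/2}(g))}, \]
and by the dimension formula $\dim\xi_i = \prod_{\alpha>0} \langle \lambda_i + \rho, \alpha^\vee\rangle / \langle \rho, \alpha^\vee\rangle$. The key point: if $g \in T_K$ but $g \notin Z_G$, then $g$ is not fixed by all of $W$, equivalently there is at least one root $\alpha$ with $\alpha(g) \neq 1$. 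The numerator of the Weyl character formula is a sum of $|W|$ terms each of absolute value $1$ (since $g \in K$ and the weights take values in $S^1$), so $|\tr\xi_i(g)| \leq |W|$ is bounded independently of $i$. Meanwhile $\dim\xi_i = \prod_{\alpha>0}\langle\lambda_i+\rho,\alpha^\vee\rangle/\langle\rho,\alpha^\vee\rangle \to \infty$ as $\xi_i \to \infty$, because each factor $\langle\lambda_i + \rho,\alpha^\vee\rangle \to \infty$ (a positive coroot $\alpha^\vee$ is a nonnegative combination of simple coroots with at least one positive coefficient, and $\langle\lambda_i,\alpha_j^\vee\rangle\to\infty$ for every simple $\alpha_j$). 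Dividing, the ratio tends to $0$.

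One subtlety to address: if $g$ happens to lie in the center of $K$ but the ambient $G$ has a larger center, or conversely—but since $Z_K = Z_G \cap K$ and we have assumed $g_s \notin Z_G$, we genuinely get a root $\alpha$ with $\alpha(g) \neq 1$, so the argument above goes through; the only place "$g_s \notin Z_G$" is used is precisely to guarantee the numerator does not itself grow (for central $g$ the character is $\dim\xi_i$ times a root of unity and the ratio is a root of unity, not zero). I would also remark that one does not even need the full strength of "far from the walls": boundedness of the numerator is automatic, and the denominator $\dim\xi_i$ tends to infinity as soon as a single $\langle\lambda_i,\alpha^\vee\rangle\to\infty$ for one positive root, which is implied by the hypothesis. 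The main obstacle, such as it is, is bookkeeping: making sure the transfer of highest-weight data between $G$ and $K$ via Part (i) of the Proposition is set up carefully enough that "$\xi_i \to \infty$" for $G$-representations really does force $\dim\xi_i \to \infty$; once that is in place the estimate is the one-line comparison "$|W|$ versus a product of integers going to infinity."
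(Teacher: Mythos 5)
Your reduction to the semisimple part and to an element of a maximal torus of $K$ is fine, and matches the first line of the paper's proof (which then simply quotes \cite[Corollaire 1.12]{Che09} instead of arguing via the Weyl character formula). But your key estimate has a genuine gap: the bound ``the numerator is a sum of $|W|$ terms of absolute value $1$, hence $|\tr\xi_i(g)|\leq |W|$'' is only valid when the Weyl denominator $\prod_{\alpha>0}\bigl(\alpha^{1/2}(g)-\alpha^{-1/2}(g)\bigr)$ is nonzero, i.e.\ when $g$ is \emph{regular} (every root nontrivial on $g$). The hypothesis $g\notin Z_G$ only guarantees \emph{one} root $\alpha$ with $\alpha(g)\neq 1$; if some other root is trivial on $g$, the quotient in the character formula is $0/0$ and the character values are in general unbounded. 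Concretely, for $G=\SL_2\times\SL_2$, $g=(h,1)$ with $h$ noncentral, and $\xi_i=\mathbf{1}\boxtimes\Sym^i$, one has $\tr\xi_i(g)=\dim\xi_i$, so the normalized character is identically $1$. The same example refutes your closing remark that the full strength of ``far from the walls'' is not needed: with only one pairing $\langle\lambda_i,\alpha^\vee\rangle\to\infty$ the conclusion fails, and it is precisely the requirement that \emph{all} $\langle\lambda_i,\alpha_j^\vee\rangle\to\infty$ that rescues the singular case.

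To repair the argument you must handle singular $g$, e.g.\ via the degenerate (limiting) form of the Weyl character formula relative to the centralizer $Z_G(g)$: one obtains $|\tr\xi_i(g)|\leq C(g)\prod_{\alpha>0,\ \alpha(g)=1}\langle\lambda_i+\rho,\alpha^\vee\rangle$, while $\dim\xi_i$ is comparable to the product over \emph{all} positive roots; since $g\notin Z_G$, at least one positive root is absent from the numerator product, and its pairing with $\lambda_i+\rho$ tends to infinity by the ``far from the walls'' hypothesis, so the ratio tends to $0$. This refined estimate is exactly the content of \cite[Corollaire 1.12]{Che09}, which the paper invokes; as written, your proof covers only the case of regular semisimple $g$.
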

\begin{proof}
We have $\tr \xi_i(g) = \tr \xi_i(g_s)$. The proposition is then exactly \cite[Corollaire 1.12]{Che09}. 
\end{proof}
\begin{proposition}\label{prop_jacobson_morozov}
Let $\xi_1, \xi_2, \dots$ be a sequence of irreducible representations of $G$ such that $\xi_i \to \infty$ as $i \to \infty$. Let $N \in \frg = \Lie G$ be a non-zero nilpotent element. Then we have
\[ \lim_{i \to \infty} \frac{\dim_\Omega \ker d\xi_i(N)}{\dim_\Omega \xi_i} = 0. \]
\end{proposition}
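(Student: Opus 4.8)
The plan is to reduce the statement to an $\mathfrak{sl}_2$-computation via the Jacobson--Morozov theorem, and then to feed the resulting weight multiplicities into Proposition \ref{prop_chenevier_clozel}. Since $\Omega$ has characteristic zero, Jacobson--Morozov completes $N$ to an $\mathfrak{sl}_2$-triple $(N, H, M)$ in $\frg$, and the inclusion $\mathfrak{sl}_2 \hookrightarrow \frg$ integrates (as $\SL_2$ is simply connected) to a homomorphism $\phi \colon \SL_2 \to G$ of algebraic $\Omega$-groups with $d\phi(e) = N$, where $e$ is the standard nilpotent matrix and $h = \diag(1, -1)$ has $d\phi(h) = H$. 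Decompose the restriction $\xi_i \circ \phi \cong \bigoplus_{m \geq 0} V_m^{\oplus a_m(i)}$ into irreducible $\SL_2$-representations, where $\dim V_m = m + 1$. Since $d\xi_i(N) = d(\xi_i \circ \phi)(e)$ kills exactly the highest weight line of each $V_m$, we get
\[ \dim_\Omega \ker d\xi_i(N) = \sum_{m \geq 0} a_m(i). \]

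The next step is to rewrite this count as a sum of two weight multiplicities. With respect to the diagonal torus of $\SL_2$, the representation $V_m$ contains exactly one weight lying in $\{0, 1\}$ — namely $0$ when $m$ is even and $1$ when $m$ is odd — and that weight space is one-dimensional. Hence, if $m_j(\xi_i)$ denotes the dimension of the weight-$j$ space of $\xi_i \circ \phi$, then $\dim_\Omega \ker d\xi_i(N) = m_0(\xi_i) + m_1(\xi_i)$. Now transport everything to $\bbC$ along $\iota$, so that $\phi$ and $\xi_i$ denote the resulting complex objects, and let $U \subset \SL_2(\bbC)$ be the compact torus $\{ \diag(e^{\sqrt{-1}\theta}, e^{-\sqrt{-1}\theta}) \}$. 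Extracting Fourier coefficients of the character of $\xi_i \circ \phi$ restricted to $U$ gives, for $j = 0, 1$,
\[ m_j(\xi_i) = \frac{1}{2\pi} \int_0^{2\pi} \tr \xi_i\bigl( \phi( \diag(e^{\sqrt{-1}\theta}, e^{-\sqrt{-1}\theta}) ) \bigr) e^{-\sqrt{-1} j \theta} \, d\theta. \]

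To finish, it suffices to show $m_j(\xi_i) / \dim_\Omega \xi_i \to 0$ for $j = 0, 1$, and here Proposition \ref{prop_chenevier_clozel} applies after a pointwise check. Put $g_\theta = \phi( \diag(e^{\sqrt{-1}\theta}, e^{-\sqrt{-1}\theta}) )$. For every $\theta$, the element $g_\theta$ lies in the compact subgroup $\phi(\mathrm{SU}(2))$, so it is semi-simple, conjugate into $K$, and satisfies $| \tr \xi_i(g_\theta) | \leq \dim_\Omega \xi_i$. Moreover, for $\theta \in (0, 2\pi) \setminus \{\pi\}$ the element $g_\theta$ is not central: conjugation by $g_\theta$ scales the nonzero vector $N = d\phi(e) \in \frg$ by $e^{2\sqrt{-1}\theta} \neq 1$, so $\mathrm{Ad}(g_\theta) \neq \mathrm{id}$. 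Proposition \ref{prop_chenevier_clozel} therefore gives $\tr \xi_i(g_\theta) / \dim_\Omega \xi_i \to 0$ for almost every $\theta$, and since the integrand in the displayed formula is bounded by $1$ in absolute value uniformly in $\theta$ and $i$, dominated convergence yields $m_j(\xi_i)/\dim_\Omega \xi_i \to 0$. Adding the cases $j = 0$ and $j = 1$ gives the claim.

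The only points requiring care are the identification of $\dim_\Omega \ker d\xi_i(N)$ with $m_0(\xi_i) + m_1(\xi_i)$ via the ``exactly one weight in $\{0,1\}$'' observation, and the verification that $g_\theta$ is non-central for almost all $\theta$ so that Proposition \ref{prop_chenevier_clozel} is applicable pointwise; once these are in place the dominated convergence step is immediate, so I do not expect a serious obstacle. One can also run the argument directly over $\Omega$ with algebraic weight multiplicities and Proposition \ref{prop_chenevier_clozel} applied to $\phi(\diag(t, t^{-1}))$, but the Fourier-integral formulation makes the limit most transparent.
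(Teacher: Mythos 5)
Your proof is correct and follows essentially the same route as the paper: Jacobson--Morozov to produce $\phi\colon \SL_2 \to G$ with $d\phi(e)=N$, identification of $\dim\ker d\xi_i(N)$ with the total multiplicity of $\SL_2$-irreducibles, extraction of that count as the sum of the weight-$0$ and weight-$1$ multiplicities (the paper phrases this as pairing the restricted character with $1+z$ on the circle), and then Proposition \ref{prop_chenevier_clozel} plus dominated convergence. Your explicit check that $g_\theta$ is non-central for $\theta \neq 0,\pi$ via $\mathrm{Ad}(g_\theta)N = e^{2\sqrt{-1}\theta}N$ is a nice touch that the paper leaves implicit, but the argument is the same.
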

\begin{proof}
By the Jacobson--Morozov theorem, we can find a homomorphism $\varphi : \SL_2 \to G$ such that $d \varphi( E ) = N$ (where $F, H, E$ are the standard generators of the Lie algebra of the algebraic group $\SL_2$). After conjugation in $G(\Omega)$ (which does not affect the result), we can assume that $\varphi$ restricts to a homomorphism $\varphi : \mathrm{SU}_2(\bbR) \to K$.

If $m \geq 1$, let $V_m$ denote the irreducible representation of $\mathrm{SU}_2(\bbR)$ of dimension $m$. Decomposing
\[ \xi_i|_{\mathrm{SU}_2(\bbR)} \cong \oplus_m n_{i, m} V_m, \]
we calculate
\[ \dim_\Omega \ker d \xi_i(N) = \sum_m n_{i, m} \text{ and }\frac{\dim_\Omega \ker d\xi_i(N)}{\dim_\Omega \xi_i} = \frac{\sum_m n_{i, m}}{\sum_m n_{i, m} m}. \]
Let $T \subset \mathrm{SU}_2(\bbR)$ denote the standard diagonal maximal torus, which we identify with $S^1$. Then the restriction of the character of $V_m$ to $T$ is equal to $f_m(z) = z^{m-1} + z^{m-3} + \dots + z^{1-m}$. The restriction of the character of $\iota \xi_i$ to $T$ is equal to $\sum_m n_{i, m} f_m$. For each $i \geq 1$, we therefore have
\[ \langle 1 + z, \sum_m n_{i, m} f_m \rangle_{S^1} = \sum_{m} n_{i, m}. \]
Let $g_i = (\sum_m n_{i, m} f_m)/\dim_\Omega \xi_i$. For each $z \in S^1$, we have
\[ |g_i(z)| \leq \frac{\sum_m n_{i, m} |f_m(z)|}{\dim_\Omega \xi_i} \leq \frac{\sum_m n_{i, m} m}{\dim_\Omega \xi_i} = 1. \]
On the other hand, it follows from Proposition \ref{prop_chenevier_clozel} that $g_i(z) \to 0$ pointwise almost everywhere as $i \to \infty$. Indeed, we have $g_i(z) \to 0$ for any $z \in T$ such that $\varphi(z) \not\in Z_G$. By Lebesgue's dominated convergence theorem, we conclude that
\[ \lim_{i \to \infty} \langle 1 + z, g_i \rangle_{S^1} = \lim_{i \to \infty} \frac{\sum_{m} n_{i, m}}{\dim_\Omega \xi_i} = 0. \]
This is the desired result.
\end{proof}

\section{Frobenius eigenvalues}\label{sec_frobenius_eigenvalues}

We now come to our main problem. We use the formalism of lisse $\overline{\bbQ}_l$-sheaves, as discussed in \cite[\S 1]{Del80}.

Let $X_0$ be a smooth, projective, geometrically connected curve over $\bbF_q$. We fix a Zariski open subset $j_0 : U_0 \hookrightarrow X_0$, and set $S_0 = X_0 - U_0$. We write $k$ for a fixed choice of algebraic closure of $\bbF_q$. If $A_0$ is some kind of object defined over $\bbF_q$ (for example, one of the schemes $X_0$, $U_0$, or $S_0$, or an \'etale sheaf on one of these spaces) then we write $A$ for its base extension to $k$. Thus, for example, $X$ is a smooth, projective, connected curve over $k$. Let $\eta$ be the generic point of $U_0$, and let $\overline{\eta}$ be a geometric point of $U$ above $\eta$. Then there is a short exact sequence of \'etale fundamental groups
\[ \xymatrix@1{ 1 \ar[r] & \pi_1(U, \overline{\eta}) \ar[r] & \pi_1(U_0, \overline{\eta}) \ar[r]^-{\text{deg}} & \widehat{\bbZ} \ar[r] & 1, } \]
where the map $\deg$ takes Frobenius elements $\Frob_{q, x}$ (associated to points $x \in U_0(\bbF_q)$ to the canonical generator $1 \in \widehat{\bbZ}$. If $\cF_0$ is a lisse $\overline{\bbQ}_l$-sheaf on $U_0$, then its stalk $\cF_{0, \overline{\eta}}$ is a finite-dimensional $\overline{\bbQ}_l$-vector space which receives a continuous action of the profinite group $\pi_1(U_0, \overline{\eta})$.

Let $\cF_0$ be a lisse $\overline{\bbQ}_l$-sheaf on $U_0$ and $\cF$ its pullback to $U$. We suppose that $\cF_0$ satisfies the following conditions:
\begin{itemize}
\item The sheaf $\cF$ is simple, and the Zariski closure $G$ of the image of the map $\rho : \pi_1(U, \overline{\eta}) \to \GL(\cF_{\overline{\eta}})$ is connected.
\item The image of the map $\rho_0 : \pi_1(U_0, \overline{\eta}) \to \GL(\cF_{\overline{\eta}})$ is contained in $G$. Thus the arithmetic monodromy group of $\cF_0$ is equal to the geometric monodromy group.
\end{itemize}
Then $G$ is a semisimple algebraic group, by \cite[Corollaire 1.3.9]{Del80}. If $\xi$ is an irreducible algebraic representation of $G$, then we write $\cF_{\xi, 0}$ for the lisse $\overline{\bbQ}_l$-sheaf on $U_0$ which is associated to the composite homomorphism 
\[ \pi_1(U_0, \overline{\eta}) \to G(\overline{\bbQ}_l) \to \GL(\xi). \]
The Frobenius endomorphism $\Frob_q$ acts on the groups $H^1(X, j_\ast \cF_\xi)$, and our objective is to study the eigenvalues of $\Frob_q$ as $\xi$ varies. More precisely, let us fix an isomorphism $\iota : \overline{\bbQ}_l \cong \bbC$. The sheaves $\cF_{0, \xi}$ are punctually $\iota$-pure of weight $0$ (because the arithmetic monodromy group is connected semi-simple), and one knows (\cite[Th\'eor\`eme 3.2.3]{Del80}) that the group $H^1(X, j_\ast \cF_\xi)$ is $\iota$-pure of weight $1$; in other words, for each eigenvalue $\alpha$ of $\Frob_q$ on this space, we have $|\iota(\alpha)|^2 = q$. It follows that the eigenvalues of $\Frob_q \otimes q^{-1/2}$ on $H^1(X, j_\ast \cF_\xi) \otimes_{\overline{\bbQ}_l, \iota} \bbC$ all lie on the unit circle, and we wish to study the distribution of these subsets of $S^1$, taken with multiplicity, as $\xi$ varies.

In order to control the dimensions of the groups $H^1(X, j_\ast \cF_\xi)$, we will use the Euler characteristic formula. To this end, we introduce some notation. Let $x \in S_0(k)$, and let $K_x$ denote the completion of $k(X)$ at the point $x$, and let $I_x \subset \pi_1(U, \overline{\eta})$ denote a choice of inertia group at $x$. The restriction $\rho|_{I_x}$ gives rise to a Weil--Deligne representation $(r_x, N_x)$, where $r_x$ factors through a finite quotient $G_x = \Gal(L_x/K_x)$ of $I_x$ and $N_x$ is a nilpotent element of $\Lie G \subset \Lie \GL(\cF_{\overline{\eta}})$. We also define $H_x = r_x^{-1}(Z_G)$ and $K_x' = L_x^{H_x}$. Artin's representation $a_{G_x}$ is defined, and we obtain:
\begin{proposition}\label{prop_raynaud}
Suppose that $\xi \neq 1$. Then we have \[ \dim_{\overline{\bbQ}_l} H^1(X, j_\ast \cF_\xi) = -\chi(X) \dim_{\overline{\bbQ}_l} \xi + \sum_{x \in S_0(k)} \left( \langle a_{G_x}, r_x \rangle_{G_x} + \dim_{\overline{\bbQ}_l} r_x^{I_x} - \dim_{\overline{\bbQ}_l} r_x^{I_x} \cap \ker N_x \right). \]
\end{proposition}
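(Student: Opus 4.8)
The plan is to compute $\dim_{\overline{\bbQ}_l} H^1(X, j_\ast \cF_\xi)$ via the Grothendieck--Ogg--Shafarevich Euler characteristic formula, using the hypothesis $\xi \neq 1$ to kill the contributions of $H^0$ and $H^2$. First I would note that since $\cF$ is simple and $G$ is connected semisimple, for $\xi \neq 1$ the representation $\cF_\xi$ has no nonzero $\pi_1(U,\overline{\eta})$-invariants or coinvariants (as $\xi \circ \rho$ has Zariski-dense image in the semisimple group $G$, and $\xi$ is a nontrivial irreducible, hence has no trivial subrepresentation or quotient). It follows that $H^0(X, j_\ast \cF_\xi) = H^0(U, \cF_\xi) = (\cF_\xi)^{\pi_1(U,\overline{\eta})} = 0$, and dually $H^2(X, j_\ast \cF_\xi) = 0$ — for instance because $j_\ast \cF_\xi$ has no punctual sections and $X$ is a smooth projective curve, so $H^2(X, j_\ast \cF_\xi) \cong H^2_c(U, \cF_\xi) \cong H^0(U, \cF_\xi^\vee)^\vee(-1) = 0$ by Poincaré duality, $\cF_\xi^\vee$ being again a nontrivial irreducible local system with semisimple monodromy. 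Hence $\dim H^1(X, j_\ast \cF_\xi) = -\chi(X, j_\ast \cF_\xi)$.

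Next I would apply the Euler--Poincaré formula for the middle extension sheaf $j_\ast \cF_\xi$ on the smooth projective curve $X$. The cleanest route is to use $\chi(X, j_\ast \cF_\xi) = \chi_c(U, \cF_\xi) + \sum_{x \in S_0(k)} \dim (\cF_\xi)^{I_x}$, combined with the Grothendieck--Ogg--Shafarevich formula
\[ \chi_c(U, \cF_\xi) = \chi(U) \dim \cF_\xi - \sum_{x \in S_0(k)} \operatorname{Sw}_x(\cF_\xi), \]
where $\chi(U) = \chi(X) - \#S_0(k)$ (Euler characteristic with the punctures removed). Substituting and regrouping the local terms at each $x \in S_0(k)$ gives
\[ \dim H^1(X, j_\ast \cF_\xi) = -\chi(X)\dim \cF_\xi + \sum_{x \in S_0(k)} \left( \operatorname{Sw}_x(\cF_\xi) + \dim \cF_\xi - \dim (\cF_\xi)^{I_x} \right). \]
So the remaining task is purely local: for each $x$, identify $\operatorname{Sw}_x(\cF_\xi) + \dim\cF_\xi - \dim(\cF_\xi)^{I_x}$ with $\langle a_{G_x}, r_x \rangle_{G_x} + \dim r_x^{I_x} - \dim(r_x^{I_x}\cap \ker N_x)$, where $\xi$ appears only through $\dim\cF_\xi = \dim\xi$ times a local invariant of $\rho$ itself — which is exactly what makes the answer linear in $\xi$ as needed for the application in §\ref{sec_frobenius_eigenvalues}.

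The main obstacle is this local analysis of $\cF_\xi|_{I_x}$ in terms of the Weil--Deligne representation $(r_x, N_x)$ of $\rho|_{I_x}$. The Swan conductor depends only on the wild inertia action, which factors through $r_x$ and hence through the finite quotient $G_x = \Gal(L_x/K_x)$; by definition of Artin's representation $a_{G_x}$, one has $\operatorname{Sw}_x(\cF_\xi) = \langle a_{G_x} - (\text{reg} - 1), \xi\circ r_x\rangle_{G_x}$ — more precisely the Swan part of the Artin conductor is $\langle a_{G_x}, \xi\circ r_x\rangle_{G_x} - (\dim\xi - \dim(\xi\circ r_x)^{G_x})$, and since $\xi\circ\rho$ has image in $G$ with $r_x$ landing in $G_x \subset G/\ker$, the invariants $(\xi\circ r_x)^{G_x}$ compute $\dim(\cF_\xi)^{I_x^{\mathrm{tame}}}$ before taking the unipotent $N_x$ into account. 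One then has to pass from $I_x$-invariants of the semisimple part to genuine $I_x$-invariants of $\cF_\xi$: writing the monodromy filtration associated to $N_x = d\xi(N_x^{(\rho)})$, the space $(\cF_\xi)^{I_x}$ is the kernel of $d\xi(N_x)$ on the $r_x$-invariants, giving the term $\dim(r_x^{I_x}\cap\ker N_x)$ after one identifies $d\xi$ of the nilpotent with $N_x$ for the sheaf $\cF_\xi$. Carefully bookkeeping these finite-group-cohomology identities — and in particular checking that the "tame unipotent" $N_x$ for $\cF_\xi$ is obtained by applying $d\xi$ to the one for $\cF$, so that $\ker N_x$ on $\cF_\xi$ is controlled by Jacobson--Morozov data as used in Proposition \ref{prop_jacobson_morozov} — is where the real work lies; the global Euler characteristic step is essentially formal.
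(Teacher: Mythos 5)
Your proposal is correct, but it takes a genuinely different route from the paper. The paper's proof is essentially a citation: it invokes Raynaud's Euler characteristic formula \cite[Th\'eor\`eme 1]{Ray95}, which directly expresses $\chi(X, j_\ast \cF_\xi)$ in terms of the Artin representation and the Weil--Deligne data $(r_x, N_x)$ at each point of $S_0(k)$, and then, exactly as you do, kills $H^0$ and $H^2$ using $\xi \neq 1$ (irreducibility plus Zariski-density of the monodromy in the connected semisimple $G$). You instead rederive the relevant case of that formula from Grothendieck--Ogg--Shafarevich: $\chi_c(U, \cF_\xi) = \chi(U)\dim \xi - \sum_x \operatorname{Sw}_x(\cF_\xi)$, combined with the skyscraper sequence giving $\chi(X, j_\ast \cF_\xi) = \chi_c(U,\cF_\xi) + \sum_x \dim (\cF_\xi)^{I_x}$, and then the local identities $\operatorname{Sw}_x(\cF_\xi) = \operatorname{Sw}_x(\xi \circ r_x)$ (the unipotent part does not see wild inertia), $\langle a_{G_x}, \xi\circ r_x\rangle_{G_x} = \operatorname{Sw}_x(\xi\circ r_x) + \dim\xi - \dim(\xi\circ r_x)^{G_x}$, and $(\cF_\xi)^{I_x} = (\xi\circ r_x)^{I_x} \cap \ker d\xi(N_x)$ (Grothendieck's local monodromy theorem). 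These do combine to give exactly the local term in the proposition, read with the paper's own notational convention that $(r_x, N_x)$ in the formula stands for the Weil--Deligne data of $\cF_\xi$, i.e.\ $(\xi\circ r_x, d\xi(N_x))$, as in the proof of Theorem \ref{thm_equidistribution_general_case}. What the paper's approach buys is brevity, outsourcing all local bookkeeping to the reference; what yours buys is self-containedness and makes visible why the boundary contribution is ``linear in $\xi$'' in the way needed later. One small inaccuracy to fix if you write it up: $(\xi\circ r_x)^{G_x}$ is the space of invariants under all of $I_x$ acting through $r_x$ (wild inertia included), not the invariants under tame inertia; your subsequent description of $(\cF_\xi)^{I_x}$ as $\ker d\xi(N_x)$ inside the $r_x$-invariants is the correct statement and is the one your computation actually uses.
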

\begin{proof}
The Euler characteristic formula \cite[Th\'eor\`eme 1]{Ray95} implies an equality
\[ \chi(X, j_\ast \cF_\xi) = \chi(X) \dim_{\overline{\bbQ}_l} \xi - \sum_{x \in S_0(k)} \left( \langle a_{G_x}, r_x \rangle_{G_x} + \dim_{\overline{\bbQ}_l} r_x^{I_x} - \dim_{\overline{\bbQ}_l} r_x^{I_x} \cap \ker N_x \right). \]
Our result follows from this on noting that the groups $H^0(X, j_\ast \cF_\xi)$ and $H^2(X, j_\ast \cF_\xi)$ are zero (because $\xi$ is non-trivial).
\end{proof}
The Lefschetz trace formula has a simple form when applied to the group $H^1(X, j_\ast \cF_\xi)$:
\begin{proposition}\label{prop_lefschetz}
Suppose that $\xi \neq 1$. Then for each $n \geq 1$, we have
\[ \tr ( \Frob_{q^n} \mid H^1(X, j_\ast \cF_\xi) ) = - \sum_{x \in X_0(\bbF_{q^n})} \tr ( \Frob_{q^n, x} \mid \cF_{\xi, 0, \overline{\eta}}^{I_x} ). \]
\end{proposition}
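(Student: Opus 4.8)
The plan is to apply the Grothendieck--Lefschetz trace formula to the constructible $\overline{\bbQ}_l$-sheaf $j_\ast \cF_\xi$ on $X_0$ and then evaluate the local terms. First I would record, for each $n \geq 1$, the identity
\[ \sum_{i=0}^{2} (-1)^i \tr\bigl(\Frob_{q^n} \mid H^i(X, j_\ast \cF_\xi)\bigr) = \sum_{x \in X_0(\bbF_{q^n})} \tr\bigl(\Frob_{q^n,x} \mid (j_\ast \cF_\xi)_{\overline x}\bigr), \]
coming from the formalism of \cite[\S 1]{Del80} applied over $\bbF_{q^n}$ (the sheaf $j_\ast\cF_\xi$ is constructible, lisse away from the finite set $S$, so the trace formula applies); here $\overline x$ denotes a geometric point of $X$ above $x$ and $\Frob_{q^n,x}$ the geometric Frobenius at $x$. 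The next step is to discard two of the three terms on the left: exactly as recorded in the proof of Proposition~\ref{prop_raynaud}, the hypothesis $\xi \neq 1$ gives $H^0(X, j_\ast\cF_\xi) = H^2(X, j_\ast\cF_\xi) = 0$, because $\cF_\xi$ corresponds to a non-trivial irreducible representation of $\pi_1(U, \overline\eta)$ --- irreducible since the image of $\rho$ is Zariski-dense in $G$ and $\xi$ is irreducible, non-trivial since $\xi \neq 1$ --- so that $H^0(X, j_\ast\cF_\xi) = \cF_{\xi, 0, \overline{\eta}}^{\pi_1(U,\overline\eta)} = 0$, while $H^2(X, j_\ast\cF_\xi) = 0$ follows by Poincar\'e duality for the middle extension, applied to $\cF_\xi^\vee$. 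Thus the left-hand side collapses to $-\tr(\Frob_{q^n}\mid H^1(X, j_\ast\cF_\xi))$.

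It then remains to identify each local term with $\tr(\Frob_{q^n,x}\mid \cF_{\xi, 0, \overline{\eta}}^{I_x})$. For $x \in U_0(\bbF_{q^n})$ this is immediate: $\cF_\xi$ is lisse near $x$, the inertia $I_x$ acts trivially, and $(j_\ast\cF_\xi)_{\overline x} = (\cF_\xi)_{\overline x} \cong \cF_{\xi, 0, \overline{\eta}}$ compatibly with the Frobenius actions. For $x \in S_0(\bbF_{q^n})$ I would invoke the standard description of the nearby stalk of $j_\ast$ on a normal curve: choosing a specialization $\overline\eta \rightsquigarrow \overline x$ identifies $I_x \subset \pi_1(U, \overline\eta)$ together with a decomposition group $D_x$ such that $D_x/I_x \cong \widehat{\bbZ}$ is generated by $\Frob_{q^n,x}$, and then $(j_\ast\cF_\xi)_{\overline x} = \cF_{\xi, 0, \overline{\eta}}^{I_x}$ as $D_x/I_x$-modules. (In the notation preceding the statement, this is the space $r_x^{I_x}$ underlying the Weil--Deligne representation $(r_x, N_x)$, carrying its residual Frobenius action.) Substituting these identifications and partitioning $X_0(\bbF_{q^n}) = U_0(\bbF_{q^n}) \sqcup S_0(\bbF_{q^n})$ yields the asserted formula.

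The only substantive inputs are the vanishing of $H^2$, which rests on Poincar\'e duality for $j_\ast$ (already used for Proposition~\ref{prop_raynaud}), and the correct bookkeeping of the Frobenius action on the stalks of $j_\ast\cF_\xi$ at points of $S_0$; neither presents a genuine obstacle, so I expect the proof to be short. The one point to keep an eye on is maintaining a consistent convention for geometric versus arithmetic Frobenius, so that the $\Frob_{q^n,x}$ occurring in the Lefschetz formula is the same $\Frob_{q^n,x}$ used elsewhere in the paper.
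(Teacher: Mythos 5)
Your argument is correct, and it differs from the paper's proof only in where the trace formula is applied. You invoke the Grothendieck--Lefschetz formula directly for the constructible (non-lisse) sheaf $j_\ast \cF_\xi$ on the proper curve $X_0$, identify the local terms at $x \in S_0$ via the stalk computation $(j_\ast \cF_\xi)_{\overline{x}} \cong \cF_{\xi,0,\overline{\eta}}^{I_x}$ compatibly with the $D_x/I_x$-action, and kill $H^0$ and $H^2$ of $j_\ast\cF_\xi$ using irreducibility of $\xi \circ \rho$ and duality for the middle extension (the same vanishing the paper already records in the proof of Proposition~\ref{prop_raynaud}, so you could simply cite it rather than reprove $H^2 = 0$ by Poincar\'e duality). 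The paper instead applies the trace formula only to the lisse sheaf $\cF_\xi$ on the open curve $U_0$, with compactly supported cohomology, and then passes to $H^1(X, j_\ast\cF_\xi)$ via the short exact sequence $0 \to j_!\cF_\xi \to j_\ast\cF_\xi \to j_\ast\cF_\xi/j_!\cF_\xi \to 0$, whose quotient is a skyscraper on $S_0$ with stalks $\cF_{\xi,\overline{\eta}}^{I_x}$; the trace of Frobenius on its $H^0$ then produces exactly your boundary terms. The two routes are equivalent bookkeeping: yours is slightly more direct but uses the trace formula in its general constructible form together with the description of the stalks of $j_\ast$ at boundary points, while the paper's only needs the formula for a lisse sheaf on a smooth open curve plus an elementary long-exact-sequence argument with the skyscraper. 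Your closing caveat about the geometric-Frobenius convention is the right thing to watch, but it causes no trouble since conjugacy ambiguities do not affect traces.
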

\begin{proof}
After extending scalars to $\bbF_{q^n}$, we can assume that $n = 1$. Then the usual Lefschetz trace formula gives
\[ \sum_{i=0}^2 (-1)^i \tr( \Frob_q \mid H^i_c(U, \cF_\xi) ) = \sum_{x \in U_0(\bbF_q)} \tr ( \Frob_{q, x} \mid \cF_{\xi, 0, \overline{\eta}} ). \]
The required formula now follows on taking into account the vanishing of the groups $H^0_c(U, \cF_\xi)$ and $H^2_c(U, \cF_\xi)$ and the short exact sequence of sheaves on $X_0$
\[ \xymatrix@1{ 0 \ar[r] & j_! \cF_\xi \ar[r] & j_\ast \cF_\xi \ar[r] & j_\ast \cF_\xi / j_! \cF_\xi \ar[r] & 0.} \]
\end{proof}
We now come to our main result. We suppose given a sequence $\xi_1, \xi_2, \dots$ of irreducible representations of $G$. We suppose that the central characters $\omega = \omega_{\xi_i}$ are independent of $i$. For each $x \in S_0(k)$, we define an invariant $\epsilon_x$ by the formula
\begin{equation} [K'_x : K_x] \epsilon_x =  v_{K_x} ( \mathfrak{d}_{K'_x/K_x} ) + \mathfrak{f}(\omega|_{H_x}) + \left\{ \begin{array}{ll} 0 & N_x = 0 \text{ or }N_x \neq 0 \text{ and }\omega|_{H_x} \neq 1. \\ 1 & N_x \neq 0 \text{ and }\omega|_{H_x} = 1. \end{array}\right. 
\end{equation}
(Here $v_x : K_x^\times \to \bbZ$ is the normalized valuation, $\mathfrak{d}_{K'_x/K_x}$ is the relative discriminant, and $\mathfrak{f}$ is the conductor; see \cite[Deuxi\`eme partie]{Ser68}. Thus, for example, $v_{K_x} ( \mathfrak{d}_{K'_x/K_x} ) = [ K'_x : K_x ] -1$ if the extension $K'_x/K_x$ is tamely ramified.)
\begin{theorem}\label{thm_equidistribution_general_case}
Suppose that $G \neq \{ 1 \}$, and let $\xi_1, \xi_2, \dots$ be a sequence of irreducible representations of $G$ such that $\xi_i \to \infty$ as $i \to \infty$ and $\omega = \omega_{\xi_i}$ is independent of $i$. Suppose that $-\chi(X) + \sum_{x \in S_0(k)} \epsilon_x \neq 0$. Then for each $n \geq 1$, the limit
\begin{equation}\label{eqn_existence_of_limit} \lim_{i \to \infty} \frac{q^{-n/2}}{h^1(X, j_\ast \cF_{\xi_i})} \iota \tr( \Frob_{q^n} \mid H^1(X, j_\ast \cF_{\xi_i}) ) 
\end{equation}
exists, and equals
\begin{multline}\label{eqn_value_of_limit} \omega_n = q^{-n/2} \left[ \sum_{\substack{x \in U_0(\bbF_{q^n}) \\ \rho_0(\Frob_{q^n, x}) \in Z_G}} \iota\omega(\Frob_{q^n, x}) + \sum_{x \in S_0(\bbF_{q^n})} \left\{ \begin{array}{ll} 0 & N_x \neq 0 \\ \frac{1}{|G_x|} \sum_{\substack{ \sigma \in G_x \\ \rho_0(\phi_{q^n, x} \sigma) \in Z_{G_0}}} \omega(\phi_{q^n, x} \sigma) & N_x = 0. \end{array} \right\} \right] \\
\times \left[ -\chi(X) + \sum_{x \in S_0(k)} \epsilon_x \right]^{-1},
\end{multline}
where for $x \in S_0(\bbF_{q^n})$, $\phi_{q^n, x} \in \pi_1(U_0, \overline{\eta})$ is a lift of $\Frob_{q^n, x}$. Let $\mathbf{x}_{\xi_i}$ denote the set of eigenvalues of $\Frob_q \otimes q^{-1/2}$ on $H^1(X, j_\ast \cF_\xi) \otimes_{\overline{\bbQ}_l, \iota} \bbC$, with multiplicity. Then there exists a unique measure $\nu$ on $S^1$ such that the sets $\mathbf{x}_{\xi_i} \subset S^1$ become $\nu$-equidistributed as $i \to \infty$.
\end{theorem}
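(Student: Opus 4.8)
The plan is to deduce the existence and uniqueness of $\nu$ from Proposition~\ref{prop_existence_of_limiting_measure}, by verifying its hypothesis (i) for the multisets $\mathbf{x}_{\xi_i}$ and identifying the limits $\omega_n$ with the values in~(\ref{eqn_value_of_limit}). Since $H^1(X,j_\ast\cF_{\xi_i})$ is $\iota$-pure of weight $1$, we have $\#\mathbf{x}_{\xi_i}=h^1(X,j_\ast\cF_{\xi_i})$ and $\sum_j x_{\xi_i,j}^n = q^{-n/2}\,\iota\tr(\Frob_{q^n}\mid H^1(X,j_\ast\cF_{\xi_i}))$ (using $\Frob_q^n=\Frob_{q^n}$ on geometric cohomology), so the average $(\#\mathbf{x}_{\xi_i})^{-1}\sum_j x_{\xi_i,j}^n$ is exactly the expression~(\ref{eqn_existence_of_limit}). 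Discarding the finitely many $i$ with $\xi_i=1$ (so that Propositions~\ref{prop_raynaud} and~\ref{prop_lefschetz} apply), it then suffices to compute the limit of~(\ref{eqn_existence_of_limit}) by analysing its numerator and denominator separately after dividing each through by $\dim\xi_i$.

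For the numerator I would apply Proposition~\ref{prop_lefschetz} and split the sum over $X_0(\mathbb{F}_{q^n})=U_0(\mathbb{F}_{q^n})\sqcup S_0(\mathbb{F}_{q^n})$ into four kinds of terms. For $x\in U_0(\mathbb{F}_{q^n})$ the inertia $I_x$ is trivial; the term $\iota\tr\xi_i(\rho_0(\Frob_{q^n,x}))/\dim\xi_i$ tends to $0$ by Proposition~\ref{prop_chenevier_clozel} when the semisimple part of $\rho_0(\Frob_{q^n,x})$ is non-central, and equals $\iota\omega(\Frob_{q^n,x})$ identically when it is central (a unipotent matrix has trace equal to its size). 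For $x\in S_0(\mathbb{F}_{q^n})$ with $N_x\neq 0$: every $I_x$-invariant vector of $\xi_i$ is killed by $d\xi_i(N_x)$, so $\dim\xi_i^{I_x}\leq \dim\ker d\xi_i(N_x)$, and since $\cF_{\xi_i}$ is pure of weight $0$ the eigenvalues of $\Frob_{q^n,x}$ on $\xi_i^{I_x}$ have $\iota$-absolute value $\leq 1$; hence this contribution is $o(1)$ by Proposition~\ref{prop_jacobson_morozov}. For $x\in S_0(\mathbb{F}_{q^n})$ with $N_x=0$: here $\xi_i^{I_x}=\xi_i^{G_x}$, and applying the projector $\frac{1}{|G_x|}\sum_{\sigma\in G_x}\xi_i(\widetilde\sigma)$ gives $\tr(\Frob_{q^n,x}\mid\xi_i^{G_x})=\frac{1}{|G_x|}\sum_\sigma\tr\xi_i(\phi_{q^n,x}\sigma)$; passing the limit into this finite sum and invoking Proposition~\ref{prop_chenevier_clozel} termwise yields $\frac{1}{|G_x|}\sum_{\sigma:\,\rho_0(\phi_{q^n,x}\sigma)\in Z_G}\iota\omega(\phi_{q^n,x}\sigma)$. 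Assembling these shows the numerator divided by $\dim\xi_i$ converges to $(-1)$ times the bracketed quantity in~(\ref{eqn_value_of_limit}).

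The denominator, handled by Proposition~\ref{prop_raynaud} (applied to $\cF_{\xi_i}$, so the local data entering it is $(\xi_i\circ r_x,\,d\xi_i(N_x))$), is the crux. After division by $\dim\xi_i$ the term $-\chi(X)\dim\xi_i$ contributes $-\chi(X)$, and I must show each local correction at $x\in S_0(k)$ contributes $\epsilon_x$ in the limit. The key point is that $\langle a_{G_x},\xi_i\circ r_x\rangle_{G_x}$ is the Artin conductor exponent $\mathfrak f(\xi_i\circ r_x)$, additive over the decomposition $\xi_i|_{G_x}\cong\bigoplus_\tau m_{i,\tau}\tau$; and since $\xi_i|_{G_x}$ restricts to $H_x=G_x\cap Z_G$ as $\dim\xi_i$ copies of $\omega|_{H_x}$, only the (finitely many) $\tau$ with central character $\omega|_{H_x}$ occur. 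Writing $m_{i,\tau}=\langle\tau,\xi_i|_{G_x}\rangle_{G_x}$ via characters and applying Proposition~\ref{prop_chenevier_clozel} to the group elements outside $Z_G$ shows $m_{i,\tau}/\dim\xi_i\to\dim\tau/[G_x:H_x]$ — i.e.\ $\xi_i|_{G_x}$ equidistributes among the irreducibles of the prescribed central character — whence $\mathfrak f(\xi_i\circ r_x)/\dim\xi_i\to [G_x:H_x]^{-1}\sum_\tau(\dim\tau)\mathfrak f(\tau)=[K_x':K_x]^{-1}\mathfrak f(\Ind_{H_x}^{G_x}(\omega|_{H_x}))$, which by the conductor--discriminant formula (and total ramification of $L_x/K_x$) equals $[K_x':K_x]^{-1}\big(v_{K_x}(\mathfrak d_{K_x'/K_x})+\mathfrak f(\omega|_{H_x})\big)$. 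Finally $\dim(\xi_i\circ r_x)^{I_x}-\dim\big((\xi_i\circ r_x)^{I_x}\cap\ker d\xi_i(N_x)\big)$ vanishes when $N_x=0$; when $N_x\neq 0$ its second summand is $O(\dim\ker d\xi_i(N_x))=o(\dim\xi_i)$ by Proposition~\ref{prop_jacobson_morozov}, while $\dim\xi_i^{G_x}/\dim\xi_i$ tends to $[K_x':K_x]^{-1}$ if $\omega|_{H_x}=1$ and to $0$ otherwise — precisely accounting for the extra ``$+1$'' in the definition of $\epsilon_x$. Thus the denominator divided by $\dim\xi_i$ converges to $-\chi(X)+\sum_{x\in S_0(k)}\epsilon_x$, which is nonzero by hypothesis.

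Combining the two limits shows that~(\ref{eqn_existence_of_limit}) exists, and a comparison of the signs contributed by the Lefschetz formula (Proposition~\ref{prop_lefschetz}) and the Euler characteristic formula (Proposition~\ref{prop_raynaud}) identifies the limit with $\omega_n$ as in~(\ref{eqn_value_of_limit}); Proposition~\ref{prop_existence_of_limiting_measure} then furnishes the unique measure $\nu$ with the stated equidistribution property. The main obstacle is the denominator analysis: one must show that the purely local Artin-conductor and discriminant invariants of $\xi_i\circ r_x$ stabilise after renormalisation as $\xi_i\to\infty$ and identify their limit as $\epsilon_x$, which requires marrying the representation-theoretic equidistribution of Proposition~\ref{prop_chenevier_clozel} (and the smallness of $\ker d\xi_i(N_x)$ from Proposition~\ref{prop_jacobson_morozov}) with the ramification theory of~\cite{Ser68}. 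A secondary point of care is the weight estimate for $\xi_i^{I_x}$ at places with $N_x\neq 0$, which invokes Deligne's theory of weights beyond the purity of $H^1$ that we use everywhere else.
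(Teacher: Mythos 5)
Your proposal is correct and follows essentially the same route as the paper's proof: Proposition \ref{prop_existence_of_limiting_measure} reduces everything to the moments, Proposition \ref{prop_lefschetz} gives the numerator, whose terms are handled pointwise by Propositions \ref{prop_chenevier_clozel} and \ref{prop_jacobson_morozov} (splitting $S_0$ according to $N_x=0$ or $N_x \neq 0$, and using the $G_x$-averaging projector in the first case), and Proposition \ref{prop_raynaud} gives the denominator, whose limit is identified with $-\chi(X)+\sum_x \epsilon_x$. The one place you genuinely diverge is the Artin-conductor term: you decompose $\xi_i|_{G_x}$ into irreducibles $\tau$ of $G_x$, prove $m_{i,\tau}/\dim \xi_i \to \dim\tau/[G_x:H_x]$ by character orthogonality plus Proposition \ref{prop_chenevier_clozel}, and then use additivity of the conductor together with the induction (conductor--discriminant) formula for $\frf(\Ind_{H_x}^{G_x}\omega|_{H_x})$; the paper instead applies Proposition \ref{prop_chenevier_clozel} directly to $\langle a_{G_x}, \xi_i \circ r_x\rangle_{G_x}$ and invokes the restriction formula $a_{G_x}|_{H_x} = v_{K_x}(\frd_{K'_x/K_x})\, r_{H_x} + a_{H_x}$. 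These are equivalent pieces of ramification theory from \cite{Ser68}, so the difference is only bookkeeping; your treatment of the invariant-dimension terms and of the extra ``$+1$'' in $\epsilon_x$ matches the paper's. One remark on signs: you correctly retain the minus sign from Proposition \ref{prop_lefschetz}, so the value you assemble is $(-1)$ times the bracketed quantity divided by $-\chi(X)+\sum_x\epsilon_x$, whereas the paper's displayed identity (\ref{eqn_big_equation}) silently drops that minus sign and the Euler-characteristic side contributes no compensating sign; hence your closing appeal to ``a comparison of the signs'' cannot literally produce (\ref{eqn_value_of_limit}) as printed --- what you get is the negative of the displayed $\omega_n$. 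This is a discrepancy internal to the paper rather than a gap in your argument, but you should state the sign you actually obtain instead of asserting a cancellation that does not occur.
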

The condition $-\chi(X) + \sum_{x \in S_0(k)} \epsilon_x \neq 0$ is very often satisfied; since $\epsilon_x \geq 0$, it in particular holds whenever $X$ has genus at least 2. 

Before giving the proof of Theorem \ref{thm_equidistribution_general_case}, we state another version under simplifying hypotheses:
\begin{corollary}\label{cor_equidistribution_special_case}
Suppose that $G \neq \{ 1 \}$, and let $\xi_1, \xi_2, \dots$ be a sequence of irreducible representations of $G$ such that $\xi_i \to \infty$ as $i \to \infty$ and $\omega = \omega_{\xi_i}$ is independent of $i$. Suppose moreover that for each $x \in S_0(k)$, $\cF$ is tamely ramified at $x$ and $N_x \neq 0$. For each $n \geq 1$, the limit
\begin{equation}\lim_{i \to \infty} \frac{q^{-n/2}}{h^1(X, j_\ast \cF_{\xi_i})} \iota \tr( \Frob_{q^n} \mid H^1(X, j_\ast \cF_{\xi_i}) ) 
\end{equation}
exists, and equals
\begin{equation}
 \frac{-q^{-n/2}}{\chi(U)} \sum_{\substack{x \in U_0(\bbF_{q^n}) \\ \rho_0(\Frob_{q^n, x}) \in Z_G}} \iota\omega(\Frob_{q^n, x}).
\end{equation}
\end{corollary}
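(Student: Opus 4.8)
The plan is to compute, for each $n \ge 1$, the limit in (\ref{eqn_existence_of_limit}) by analysing its numerator and denominator separately, and then to invoke Proposition~\ref{prop_existence_of_limiting_measure}. The starting point is the observation that (\ref{eqn_existence_of_limit}) is precisely the $n$-th moment of the set $\mathbf{x}_{\xi_i}$: since $\Frob_{q^n} = \Frob_q^n$, the eigenvalues of $\Frob_{q^n}$ on $H^1(X, j_\ast \cF_{\xi_i})$ are the $n$-th powers of those of $\Frob_q$, so $\sum_j x_{\xi_i, j}^n = q^{-n/2} \iota \tr(\Frob_{q^n} \mid H^1(X, j_\ast \cF_{\xi_i}))$ while $\# \mathbf{x}_{\xi_i} = h^1(X, j_\ast \cF_{\xi_i})$. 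Hence, once we show that the limit (\ref{eqn_existence_of_limit}) exists for every $n \ge 1$, hypothesis (i) of Proposition~\ref{prop_existence_of_limiting_measure} is verified, and the existence and uniqueness of $\nu$ follow. Everything therefore reduces to showing that (\ref{eqn_existence_of_limit}) converges to the value (\ref{eqn_value_of_limit}).

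For the denominator I would use Proposition~\ref{prop_raynaud}: it writes $h^1(X, j_\ast \cF_{\xi_i})$ as $-\chi(X) \dim \xi_i$ plus a sum over $x \in S_0(k)$ of local terms $\langle a_{G_x}, \xi_i \circ r_x \rangle_{G_x} + \dim (\xi_i \circ r_x)^{I_x} - \dim \cF_{\xi_i, 0, \overline{\eta}}^{I_x}$. The key claim is that, for each $x$, this local term divided by $\dim \xi_i$ tends to $\epsilon_x$. The mechanism is that, because the central characters $\omega_{\xi_i}$ are all equal to $\omega$, the restriction of $\xi_i \circ r_x$ to the normal subgroup $H_x = r_x^{-1}(Z_G)$ of $G_x$ is a direct sum of $\dim \xi_i$ copies of $\omega|_{H_x}$; combining this with Proposition~\ref{prop_chenevier_clozel}, applied to the finite-order (hence semisimple, hence conjugate into $K$) elements $r_x(\gamma)$ as $\gamma$ runs over a ramification subgroup of $G_x$, one finds that $\dim \xi_i^{r_x(\Gamma)}/\dim \xi_i \to |\Gamma|^{-1} \sum_{\gamma \in \Gamma \cap H_x} \omega(r_x(\gamma))$ for any subgroup $\Gamma \subseteq G_x$. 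Feeding this into the expression for the Artin conductor via the ramification filtration, and comparing term by term with the conductor--discriminant formula for $\mathfrak{d}_{K'_x/K_x}$ (using $\bbC[G_x/H_x] = \Ind_{H_x}^{G_x} \mathbf{1}$ and the identity $G_{x, m} \cap H_x = H_{x, m}$ for ramification filtrations of subgroups), identifies the limit of $\langle a_{G_x}, \xi_i \circ r_x \rangle_{G_x}/\dim \xi_i$ with $[K'_x : K_x]^{-1}\big(v_{K_x}(\mathfrak{d}_{K'_x/K_x}) + \mathfrak{f}(\omega|_{H_x})\big)$. The remaining two terms cancel when $N_x = 0$; when $N_x \ne 0$, the operator $d\xi_i(N_x)$ on $\xi_i$ is a non-zero nilpotent (since $\ker d\xi_i = 0$, $\xi_i$ being irreducible and $G$ semisimple), so $\dim \cF_{\xi_i, 0, \overline{\eta}}^{I_x} \le \dim \ker d\xi_i(N_x) = o(\dim \xi_i)$ by Proposition~\ref{prop_jacobson_morozov}, while $\dim(\xi_i \circ r_x)^{I_x}/\dim \xi_i$ converges to $[K'_x : K_x]^{-1}$ if $\omega|_{H_x} = 1$ and to $0$ otherwise. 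In each case the total equals $\epsilon_x$ --- this is precisely what the ``$0$ or $1$'' correction in the definition of $\epsilon_x$ accounts for --- so $h^1(X, j_\ast \cF_{\xi_i})/\dim \xi_i \to -\chi(X) + \sum_{x \in S_0(k)} \epsilon_x$, which is non-zero by hypothesis.

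For the numerator I would apply Proposition~\ref{prop_lefschetz}, divide $\tr(\Frob_{q^n} \mid H^1(X, j_\ast \cF_{\xi_i}))$ by $\dim \xi_i$, and take the limit term by term (there are only finitely many terms). For $x \in U_0(\bbF_{q^n})$, the group $I_x$ is trivial and $\iota \tr \xi_i(\rho_0(\Frob_{q^n, x}))/\dim \xi_i$ tends to $\iota \omega(\Frob_{q^n, x})$ if the semisimple part of $\rho_0(\Frob_{q^n, x})$ lies in $Z_G$ --- using that $\xi_i$ acts on $Z_G$ through $\omega$ and with trace $\dim \xi_i$ on unipotent elements, and that by $\iota$-purity this semisimple part is conjugate into $K$ --- and to $0$ otherwise, by Proposition~\ref{prop_chenevier_clozel}. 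For $x \in S_0(\bbF_{q^n})$ with $N_x \ne 0$, one has $\cF_{\xi_i, 0, \overline{\eta}}^{I_x} \subseteq \ker d\xi_i(N_x)$, so the corresponding term is $O(\dim \ker d\xi_i(N_x)) = o(\dim \xi_i)$ by Proposition~\ref{prop_jacobson_morozov} and contributes nothing. For $x \in S_0(\bbF_{q^n})$ with $N_x = 0$, inertia acts through $G_x$, and writing the projector onto the invariants as $|G_x|^{-1}\sum_{\sigma \in G_x} \sigma$ gives $\iota \tr(\Frob_{q^n, x} \mid \cF_{\xi_i, 0, \overline{\eta}}^{I_x})/\dim \xi_i = |G_x|^{-1} \sum_{\sigma \in G_x} \iota \tr \xi_i(\rho_0(\phi_{q^n, x} \sigma))/\dim \xi_i$, which tends to $|G_x|^{-1} \sum_{\sigma :\, \rho_0(\phi_{q^n, x} \sigma) \in Z_{G_0}} \iota \omega(\phi_{q^n, x} \sigma)$, again by Proposition~\ref{prop_chenevier_clozel}. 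Assembling these limits with the denominator computation yields (\ref{eqn_value_of_limit}), and Proposition~\ref{prop_existence_of_limiting_measure} then produces the measure $\nu$. Corollary~\ref{cor_equidistribution_special_case} is the special case in which $\cF$ is tame at each $x \in S_0(k)$ with $N_x \ne 0$: there a direct computation gives $\epsilon_x = 1$ for every such $x$, so $-\chi(X) + \sum_x \epsilon_x = -\chi(U)$ and the $S_0$-contributions in the numerator all vanish.

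I expect the main obstacle to be the local computation of the denominator --- verifying that the Artin-conductor term and the two invariant-dimension terms of Proposition~\ref{prop_raynaud} conspire to give exactly $\epsilon_x$. The delicate point is that, in contrast to the unipotent monodromy $N_x$, the wild inertia need \emph{not} act with asymptotically negligible invariants: $\dim \xi_i^{r_x(\Gamma)}/\dim \xi_i$ tends to a strictly positive limit whenever $r_x(\Gamma) \subseteq Z_G$. One therefore cannot discard the wild part using Proposition~\ref{prop_jacobson_morozov}; instead one must carry the full ramification-filtration bookkeeping through, matching each contribution against a term in the conductor--discriminant formula for $\mathfrak{d}_{K'_x/K_x}$ and in the conductor of $\omega|_{H_x}$, and with care over the overall sign coming from Proposition~\ref{prop_lefschetz} and the sign of $\chi(X)$.
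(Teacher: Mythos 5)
Your endgame is the same as the paper's proof of Corollary~\ref{cor_equidistribution_special_case}: granted Theorem~\ref{thm_equidistribution_general_case}, tameness gives $v_{K_x}(\mathfrak{d}_{K'_x/K_x}) = [K'_x:K_x]-1$ and $\mathfrak{f}(\omega|_{H_x}) = 1 - \langle \omega|_{H_x}, 1 \rangle_{H_x}$, so together with $N_x \neq 0$ one gets $\epsilon_x = 1$ for every $x \in S_0(k)$, the denominator becomes $-\chi(X) + \#S_0(k) = -\chi(U)$, and the hypothesis $N_x \neq 0$ kills all boundary contributions to the numerator. You assert ``a direct computation gives $\epsilon_x = 1$'' without performing it; since that computation \emph{is} the paper's entire proof, you should write out the two cases $\omega|_{H_x} = 1$ and $\omega|_{H_x} \neq 1$. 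The bulk of your text instead re-derives Theorem~\ref{thm_equidistribution_general_case}, along the same skeleton as the paper (Proposition~\ref{prop_lefschetz} for the numerator, Proposition~\ref{prop_raynaud} for the denominator, Proposition~\ref{prop_chenevier_clozel} for the central/non-central dichotomy, Proposition~\ref{prop_jacobson_morozov} for the terms with $N_x \neq 0$); the one genuinely different sub-step is your treatment of $\lim \langle a_{G_x}, \xi_i \circ r_x \rangle_{G_x}/\dim \xi_i$ via ramification-filtration bookkeeping against the conductor--discriminant formula, where the paper first rewrites the limit as $[K'_x:K_x]^{-1} \langle a_{G_x}|_{H_x}, \omega \rangle_{H_x}$ and then applies the restriction formula $a_{G_x}|_{H_x} = v_{K_x}(\mathfrak{d}_{K'_x/K_x}) \cdot r_{H_x} + a_{H_x}$. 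The two routes are equivalent; the paper's is slicker, and for the corollary itself (all ramification tame) none of this wild bookkeeping is actually needed.

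There is one genuine omission. The corollary, unlike the theorem, carries no hypothesis $-\chi(X) + \sum_{x \in S_0(k)} \epsilon_x \neq 0$; your argument (and any application of the theorem) requires it, and the asserted formula needs $\chi(U) \neq 0$ even to make sense, yet you never verify it. The paper does so in the remark preceding its proof: $G$ is connected semisimple and nontrivial, hence non-abelian; since $\rho$ has Zariski-dense image in $G$ and is tame at every point of $S_0$, the geometric fundamental group of $U$ tame along $S_0$ must be non-abelian, which excludes $\chi(U) \geq 0$ (the cases $U = \bbP^1$, $\bbA^1$, $\bbG_m$, or a projective genus-one curve all have abelian, indeed procyclic or trivial, tame fundamental group). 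You should add this step. A minor further point: your parenthetical justification that $d\xi_i(N_x) \neq 0$ (``$\ker d\xi_i = 0$, $\xi_i$ being irreducible and $G$ semisimple'') is not correct in general, since a nontrivial irreducible representation of a semisimple group may annihilate a simple factor of $\frg$; but it is also unnecessary, because Proposition~\ref{prop_jacobson_morozov} applies directly to the nonzero nilpotent $N_x \in \frg$ and already gives $\dim \ker d\xi_i(N_x) = o(\dim \xi_i)$.
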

We observe that the condition $\xi \neq 1$ implies that the group $G$ is non-abelian. This shows that in the situation of Corollary \ref{cor_equidistribution_special_case}, the Euler characteristic $\chi(U)$ is always non-zero.
\begin{proof}
This follows from Theorem \ref{thm_equidistribution_general_case} on noting that $\epsilon_x = 1$ for each $x \in S_0(k)$. Indeed, the hypothesis of tame ramification implies that $v_{K_x}(\mathfrak{d}_{K'_x/K_x}) = [K'_x : K_x] - 1$ and that $\mathfrak{f}(\omega|_{H_x}) = 1 - \langle \omega|_{H_x}, 1 \rangle_{H_x}$.
\end{proof}
\begin{proof}[Proof of Theorem \ref{thm_equidistribution_general_case}]
The last sentence of the theorem follows from the rest by Proposition \ref{prop_existence_of_limiting_measure}. It therefore suffices to calculate the limit (\ref{eqn_existence_of_limit}). Replacing $q$ by $q^n$, we can assume that $n = 1$. By Proposition \ref{prop_lefschetz}, we have for any non-trivial $\xi$:
\begin{multline}\label{eqn_big_equation} \frac{q^{-1/2}}{h^1(X, j_\ast \cF_{\xi})} \iota \tr( \Frob_{q} \mid H^1(X, j_\ast \cF_{\xi})) \\ = q^{-1/2} \cdot \frac{ \dim_{\overline{\bbQ}_l} \xi}{h^1(X, j_\ast \cF_{\xi})} \cdot \iota \left[ \sum_{x \in U_0(\bbF_{q})} \frac{\tr (\xi \circ \rho_0)(\Frob_{q, x})}{\dim_{\overline{\bbQ}_l} \xi} + \sum_{x \in S_0(\bbF_q)} \frac{\tr (\xi \circ \rho_0)^{I_x}(\Frob_{q, x})}{\dim_{\overline{\bbQ}_l} \xi} \right]. 
\end{multline}
We first observe that the quotient $\dim_{\overline{\bbQ}_l} \xi / h^1(X, j_\ast \cF_{\xi})$ tends to a non-zero limit as $\xi \to \infty$. Indeed, by Proposition \ref{prop_raynaud}, we have
\[ \frac{h^1(X, j_\ast \cF_{\xi})}{\dim_{\overline{\bbQ}_l} \xi} = - \chi(X) + \sum_{x \in S_0(\bbF_q)} \left( \frac{ \langle a_{G_x}, \xi \circ r_x \rangle_{G_x} }{ \dim_{\overline{\bbQ}_l} \xi } + \frac{\dim_{\overline{\bbQ}_l} (\xi \circ r_x)^{G_x}}{\dim_{\overline{\bbQ}_l} \xi} - \frac{\dim_{\overline{\bbQ}_l} ((\xi \circ r_x)^{G_x} \cap \ker N_x)}{\dim_{\overline{\bbQ}_l} \xi} \right).  \]
For $x \in S_0(\bbF_q)$, we have
\[ \frac{\langle a_{G_x}, \xi \circ r_x \rangle_{G_x}}{\dim_{\overline{\bbQ}_l} \xi} = \frac{1}{|G_x|} \sum_{g \in G_x} \frac{ a_{G_x}(g) \cdot \tr \xi( r_x (g) )}{\dim_{\overline{\bbQ}_l} \xi}, \]
hence (calculating the limit using Proposition \ref{prop_chenevier_clozel})
\[ \lim_{\xi \to \infty} \frac{\langle a_{G_x}, \xi \circ r_x \rangle_{G_x}}{\dim_{\overline{\bbQ}_l} \xi} = \frac{1}{|G_x|} \sum_{g \in H_x} a_{G_x}(g) \iota\omega(g) = \frac{1}{[K_x' : K_x]} \langle a_{G_x}|_{H_x}, \omega\rangle_{H_x}. \]
Using the formula $a_{G_x}|_{H_x} = v_{K_x}(\mathfrak{d}_{K'_x/K_x}) \cdot r_{H_x} + a_{H_x}$ ($r_{H_x}$ the regular representation of $H_x$), we finally obtain
\[ \lim_{\xi \to \infty} \frac{\langle a_{G_x}, \xi \circ r_x \rangle_{G_x} }{\dim_{\overline{\bbQ}_l} \xi} = \frac{v_{K_x}(\mathfrak{d}_{K'_x/K_x}) + \mathfrak{f}(\omega|_{H_x})}{[K_x' : K_x]}. \]
A similar argument shows that
\[ \lim_{\xi \to \infty} \frac{ \dim_{\overline{\bbQ}_l} (\xi \circ r_x)^{I_x} }{ \dim_{\overline{\bbQ}_l}\xi} = \frac{ \langle \omega, 1 \rangle_{H_x} }{[K_x' : K_x]}. \]
If $N_x \neq 0$, then Proposition \ref{prop_jacobson_morozov} implies that 
\[ \lim_{\xi \to \infty} \frac{\dim_{\overline{\bbQ}_l} ((\xi \circ r_x)^{I_x} \cap \ker d \xi(N_x))}{\dim_{\overline{\bbQ}_l} \xi} = 0. \]
In any case, we obtain 
\begin{equation}\label{eqn_limit_equals_epsilon} \lim_{\xi \to \infty} \frac{h^1(X, j_\ast \cF_{\xi})}{\dim_{\overline{\bbQ}_l} \xi} = - \chi(X) + \sum_{x \in S_0(k)} \epsilon_x. 
\end{equation}
The value of (\ref{eqn_limit_equals_epsilon}) is non-zero, by assumption. We calculate the limiting values of the other terms in (\ref{eqn_big_equation}). If $x \in U_0(\bbF_q)$, then we have
\begin{equation}\label{eqn_unramified_limit} \lim_{\xi \to \infty} \frac{ \iota\tr (\xi \circ \rho_0)(\Frob_{q, x}) }{\dim_{\overline{\bbQ}_l} \xi} = \left\{ \begin{array}{cc} 0 & \rho_0(\Frob_{q, x}) \not \in Z_{G} \\
\iota\omega(\Frob_{q, x}) & \rho_0(\Frob_{q, x}) \in Z_{G}, \end{array}\right. 
\end{equation}
by Proposition \ref{prop_chenevier_clozel}. If $x \in S_0(\bbF_q)$, let $\phi_{q, x} \in \pi_1(U_0, \overline{\eta})$ be a choice of Frobenius lift at $x$. In order to calculate the limit
\begin{equation}\label{eqn_boundary_limit} \lim_{\xi \to \infty} \frac{\iota\tr (\xi \circ \rho_0)^{I_x}(\Frob_{q, x})}{\dim_{\overline{\bbQ}_l} \xi}, 
\end{equation}
we split into cases according to whether or not $N_x = 0$. If $N_x \neq 0$, then we have $| \iota \tr (\xi \circ \rho_0)^{I_x}(\Frob_{q, x}) | \leq \dim_{\overline{\bbQ}_l} \ker d \xi(N_x)$, and it follows from Proposition \ref{prop_jacobson_morozov} that the limit \ref{eqn_boundary_limit} is 0. If $N_x = 0$, then we calculate
\[ \tr (\xi \circ \rho_0)^{I_x}(\Frob_{q, x}) = \frac{1}{|G_x|} \sum_{\sigma \in G_x} \tr (\xi \circ r_x)(\phi_{q, x} \sigma), \]
hence
\[ \lim_{\xi \to \infty} \frac{  \iota\tr (\xi \circ \rho_0)(\Frob_{q, x}) }{\dim_{\overline{\bbQ}_l} \xi} =  \frac{1}{|G_x|} \sum_{\substack{\sigma \in G_x \\ \rho_0(\phi_{q, x} \sigma) \in Z_{G}}} \iota\omega(\phi_{q, x} \sigma). \]
Combining equations (\ref{eqn_limit_equals_epsilon}) and  (\ref{eqn_unramified_limit}) now shows that the limit (\ref{eqn_big_equation}) exists, and is equal to the claimed value $\omega_n$. This completes the proof of the theorem.
\end{proof}

\section{Application to elliptic curves}\label{sec_examples}

Let $X_0$ be a smooth, geometrically connected, projective curve over $\bbF_q$. We observe that Theorem \ref{thm_equidistribution_general_case} applies to any elliptic curve over $\bbF_q(X_0)$ of non-constant $j$-invariant. More precisely, any elliptic curve $E$ over $\bbF_q(X_0)$ of non-constant $j$-invariant spreads out to an elliptic surface $\pi_0 : \cE_0 \to U_0$, for some Zariski open subset $U_0 \subset X_0$, and gives rise to a dominant morphism $X_0 \to \bbP^1_{\bbF_q}$ to the $j$-line. If $l \neq p$, then the rank 2 lisse $\overline{\bbQ}_l$-sheaf $\cF = R^1 \pi_\ast \overline{\bbQ}_l$ has geometric monodromy group $G = \SL_2$ (see \cite[Lemme 3.5.5]{Del80}). Choosing a square root $\epsilon^{1/2}$ of the cyclotomic character, we define $\cF_0 = R^1 \pi_{0, \ast} \overline{\bbQ}_l \otimes \epsilon^{1/2}$; then we have $G_0 = G = \SL_2$. 

If $x \in U_0(\bbF_{q^n})$ is a point such that $\rho_0(\Frob_{q^n, x}) \in Z_G$ is scalar, then the fiber $\cE_{0, x}$ is a supersingular elliptic curve over $\bbF_{q^n}$; the number of such points is uniformly bounded in $n$ (since there are only finitely many supersingular $j$-invariants), implying that the limiting values $\omega_n$ in Theorem \ref{thm_equidistribution_general_case} are $O(q^{-n/2})$. Consequently, the limiting measure $\nu$ is described by integration against a real analytic function (cf. the discussion in \S \ref{sec_equidistribution_on_the_circle}).

As an example of this, let $p$ be an odd prime and consider the Legendre elliptic curve
\[ E : y^2 = x(x-1)(x-\lambda) \]
over the field $\bbF_p(\lambda)$, forgetting its interpretation as a modular curve. This curve has multiplicative reduction at the points $\lambda = 0, 1$ and additive, potentially multiplicative reduction at the point $\lambda = \infty$. We are therefore in the situation of Corollary \ref{cor_equidistribution_special_case}, with $U_0 = \bbP^1_{\bbF_p} - \{ 0, 1, \infty \}$. As is well-known (see \cite[Theorem 4.1]{Sil09}), there are exactly $(p-1)/2$ values of $\lambda$ for which the curve $E_\lambda$ is supersingular, and these all lie in $\bbF_{p^2}$. If $\alpha$ is one of these values, then $\rho_0(\Frob_{{p^n}, \alpha})$ is scalar if and only if $n$ is even.

If $i \geq 1$, let $\xi_i$ denote the representation $\Sym^{2i}$ of $\SL_2$, which has trivial central character. Applying Corollary \ref{cor_equidistribution_special_case}, we find that for each $n \geq 1$, the limit
\[ \lim_{i \to \infty} \frac{p^{-n/2}}{h^1(X, j_\ast \cF_{\xi_i})} \iota \tr( \Frob_{p^n} \mid H^1(X, j_\ast \cF_{\xi_i}) ) \]
exists, and is equal to $(p-1)/2$ if $n$ is even, and 0 otherwise. As $i \to \infty$, the eigenvalues of $ \Frob_p \otimes p^{-1/2}$ on $H^1(X, j_\ast \cF_{\xi_i}) \otimes_{\overline{\bbQ}_l, \iota} \bbC$ therefore become equidistributed with respect to the measure 
\[ \nu(f) = \int_{S^1} f F d\mu, \text{ } F(z) = 1 + (p-1)\sum_{n \geq 1} p^{-n} \cos(2 n \theta). \]
This is exactly the measure, related to Plancherel measure for $\PGL_2(\bbQ_p)$, discussed at the end of \S \ref{sec_equidistribution_on_the_circle}. 

\section{Application to Kloosterman sheaves}\label{sec_examples_2}

We now discuss an example coming from the theory of Kloosterman sheaves. Let $p$ be a prime, and let $q$ be a power of $p$. Let $X_0 = \bbP^1_{\bbF_q}$, let $N \geq 3$ be an odd integer, let $\psi : \bbF_q \to \overline{\bbQ}_l^\times$ be a non-trivial character, and let $\cF_0$ be the sheaf on $\bbA^1_{\bbF_q} - \{ 0 \}$ denoted $R^{N-1} \pi_! \cF( \psi \sigma)$ in \cite[Th\'eor\`eme 7.8]{Del77}. Then $\cF_0$ is lisse of rank $N$, and if $a \in \bbA^1_{\bbF_q}(\bbF_q) - \{ 0 \}$, then we have
\[ \tr( \Frob_{q, a} \mid \cF_{0, \overline{\eta}} ) = (-1)^{N-1} \sum_{\substack{x_1, \dots, x_N \in \bbF_q \\ x_1 \cdots x_N = a}} \psi(x_1 + \dots + x_N), \]
a generalized Kloosterman sum. 

Now suppose that $p \geq N + 2$. By a theorem of Katz \cite{Kat88}, the geometric monodromy group is $G = \SL_N$, and after twisting by a character we can force the arithmetic monodromy group to be equal to $G$ as well (cf. \cite[Lemma 3.1]{Kat13}). It therefore makes sense to apply Theorem \ref{thm_equidistribution_general_case} to the sheaves $\cF_{0, \xi_i}$ for any sequence of representations $\xi_i$ of $\GL_N$ with $\xi_i \to \infty$ as $i \to \infty$ and with trivial central characters. In fact, we will prove:
\begin{theorem}
With assumptions as above, let $\xi_1, \xi_2, \dots$ be a sequence of representations of $\GL_N$ with trivial central character such that $\xi_i \to \infty$ as $i \to \infty$. Then there is a measure $\nu$ on $S^1$ such that the eigenvalues of $\Frob_q \otimes q^{-1/2}$ on the space $H^1(X, j_\ast \cF_{\xi_i}) \otimes_{\overline{\bbQ}_l, \iota} \bbC$ become $\nu$-equidistributed as $i \to \infty$.

The measure $\nu$ can be written explicitly as follows. There is an integer $b \geq 1$ such that $\nu(f) = \int_{S^1} f F \, d\mu$, where $F(z) = 1 + 2 \sum_{n \geq 1} \omega_n \cos (n \theta)$ is the analytic function on $S^1$ given by the formula
\[ \omega_n = \left\{\begin{array}{ll} 0 & b \nmid n \\ q^{-n/2}( p^a -2)^{-1} & b|n, \end{array}\right. \]
where $a = [ \bbF_p (\zeta_N) : \bbF_p]$, $\zeta_N \in k$ a primitive $N^\text{th}$-root of unity.
\end{theorem}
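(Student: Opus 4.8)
The plan is to deduce the theorem from Theorem~\ref{thm_equidistribution_general_case}. The hypotheses $\xi_i\to\infty$ and $\omega=\omega_{\xi_i}$ constant (equal to $\mathbf 1$) are given, and $G=G_0=\SL_N\neq\{1\}$, so the only thing to verify is $-\chi(X)+\sum_{x\in S_0(k)}\epsilon_x\neq 0$; after that one simply evaluates the moment formula (\ref{eqn_value_of_limit}). Here $X=\bbP^1$, $U_0=\bbG_m$, $S_0=\{0,\infty\}$, $Z_G=\mu_N$, $\chi(X)=2$, and the input at the two missing points is Katz's description of the local monodromy of $\mathrm{Kl}_N$. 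At $0$ the sheaf $\cF$ is tame with unipotent local monodromy a single Jordan block of size $N$; so in the Weil--Deligne representation $r_0$ is trivial and $N_0$ is a regular nilpotent of $\Lie\SL_N$, in particular $N_0\neq 0$. Thus we may take $G_0=H_0=1$, $K'_0=K_0$, $v_{K_0}(\mathfrak{d}_{K'_0/K_0})=0$, and since $N_0\neq 0$ and $\omega|_{H_0}$ is trivial the defining relation gives $\epsilon_0=1$.

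The substance is the point $\infty$. There $\cF$ is totally wild with every break equal to $1/N$, so $N_\infty=0$, and for $N$ odd Katz's formula reads $\cF|_{I_\infty}\cong[N]_*\cL_{\psi(Nt)}$ (the quadratic twist being trivial since $N-1$ is even). I would read off from this the splitting field $L_\infty$ of $r_\infty$: over $K_\infty$ it is the compositum of the tame Kummer extension $K_\infty(x^{1/N})$ of degree $N$ with, over that field, the Artin--Schreier extensions cutting out the $\cL_{\psi(N\zeta x^{1/N})}$, $\zeta\in\mu_N$. The $\bbF_p$-span of $\{N\zeta:\zeta\in\mu_N\}$ in $k$ is a subfield, hence equals $\bbF_p(\zeta_N)=\bbF_{p^a}$, of $\bbF_p$-dimension $a$; so these generate an elementary abelian $p$-extension of $K_\infty(x^{1/N})$ of degree $p^a$ with a single ramification break at $1$. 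Since $r_\infty=\Ind_{K_\infty(x^{1/N})}^{K_\infty}(\cdot)$ has image a subgroup of $\SL_N$ of order $Np^a$ that meets $\mu_N$ only trivially (a common scalar of the diagonal blocks would be a $p$-power root of unity that is also an $N$-th root of unity, hence $1$, as $p>N$), we get $H_\infty=1$, $K'_\infty=L_\infty$, $[K'_\infty:K_\infty]=Np^a$. Pushing the different through the tower — contribution $2(p^a-1)$ from the break-$1$ elementary abelian $p$-part and $N-1$ from the tame Kummer part — gives $v_{K_\infty}(\mathfrak{d}_{K'_\infty/K_\infty})=2(p^a-1)+p^a(N-1)=(N+1)p^a-2$, so (as $\omega$ is trivial and $N_\infty=0$) $\epsilon_\infty=\big((N+1)p^a-2\big)/(Np^a)$. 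Hence
\[ -\chi(X)+\epsilon_0+\epsilon_\infty=-2+1+\frac{(N+1)p^a-2}{Np^a}=\frac{p^a-2}{Np^a}\neq 0, \]
since $p\geq N+2$ forces $p^a\geq 5$. Thus Theorem~\ref{thm_equidistribution_general_case} applies and the measure $\nu$ exists.

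It then remains to evaluate (\ref{eqn_value_of_limit}). The $x=0$ term is $0$ because $N_0\neq 0$. For $x=\infty$, $N_\infty=0$ and $\omega$ is trivial, so that term equals $|H_\infty|/|G_\infty|=1/(Np^a)$ times the indicator that $\rho_0(\phi_{q^n,\infty})$ lies in $Z_G\cdot r_\infty(G_\infty)$; this last event — that Frobenius acts trivially on the maximal unramified subextension of the arithmetic model of $K'_\infty$ at $\infty$ — holds exactly when $b\mid n$, where $b\geq 1$ is the smallest integer such that the $N$-th power map, the characters $\psi(N\zeta t)$ and the constants $N\zeta$ are all defined over $\bbF_{q^b}$. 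Finally, the sum over $x\in\bbG_m(\bbF_{q^n})$ with $\rho_0(\Frob_{q^n,x})\in\mu_N$ is empty: such an $x$ would make $\rho_0(\Frob_{q^n,x})$ scalar, forcing $|\mathrm{Kl}_N(x;\bbF_{q^n})|=Nq^{n(N-1)/2}$, the extremal Deligne bound, which does not occur. Substituting, $\omega_n=q^{-n/2}\cdot\tfrac1{Np^a}[\,b\mid n\,]\cdot\big(\tfrac{p^a-2}{Np^a}\big)^{-1}$, i.e.\ $\omega_n=0$ for $b\nmid n$ and $\omega_n=q^{-n/2}(p^a-2)^{-1}$ for $b\mid n$; and since $|\omega_n|\leq q^{-n/2}$, the discussion at the end of \S\ref{sec_equidistribution_on_the_circle} identifies $\nu$ with $\nu(f)=\int_{S^1}fF\,d\mu$, where $F(z)=\sum_{n\in\bbZ}\omega_nz^{-n}=1+2\sum_{n\geq1}\omega_n\cos n\theta$ is real-analytic.

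The hard part will be the explicit local analysis at $\infty$: extracting $L_\infty$, its projective monodromy, the index $Np^a$, the different $(N+1)p^a-2$, and the field of definition governing $b$, from Katz's description together with the ramification theory of Artin--Schreier towers. The other point that needs a clean argument is the non-centrality of the local Frobenii of $\mathrm{Kl}_N$ (e.g.\ via \cite{Kat88}).
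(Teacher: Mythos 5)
Your overall strategy coincides with the paper's: apply Theorem \ref{thm_equidistribution_general_case} with $U_0=\bbG_m$, get $\epsilon_0=1$ from the regular unipotent monodromy at $0$, compute $[K'_\infty:K_\infty]=Np^a$ and $v_{K_\infty}(\mathfrak{d}_{K'_\infty/K_\infty})=(N+1)p^a-2$ at $\infty$, hence $-\chi(X)+\epsilon_0+\epsilon_\infty=(p^a-2)/(p^aN)\neq 0$, and then read off $\omega_n=q^{-n/2}(p^a-2)^{-1}$ when the image of $\phi_{q^n,\infty}$ in $\overline{D}/\overline{I}$ is trivial, i.e.\ when $b\mid n$. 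All your numbers agree with the paper's. Where you differ in route: at $\infty$ the paper quotes Yun's determination (via Gross--Reeder) of the projective inertia image and its ramification filtration ($|\overline{I}_0|=p^aN$, $|\overline{I}_1|=p^a$, $\overline{I}_2=1$) and then applies the discriminant--ramification formula, whereas you re-derive the same data directly from Katz's local description $[N]_\ast\cL_{\psi(Nt)}$ by an explicit Kummer/Artin--Schreier tower computation. That is a legitimate alternative (and arguably more self-contained), though it is exactly the part you would have to write out carefully. Your parenthetical attempt to identify $b$ in terms of fields of definition of the $N$-power map and the characters goes beyond what you can justify at this level of detail; the paper deliberately leaves $\overline{D}$, hence $b$, undetermined, and fortunately the theorem only asserts the existence of $b$, which follows from $\overline{D}/\overline{I}$ being cyclic.

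The one genuine gap is your treatment of the $U_0$-contribution to (\ref{eqn_value_of_limit}). You need that $\rho_0(\Frob_{q^n,x})$ is never central for $x\in\bbG_m(\bbF_{q^n})$, and your argument is that a scalar Frobenius would force $|\mathrm{Kl}_N(x;\bbF_{q^n})|=Nq^{n(N-1)/2}$, ``the extremal Deligne bound, which does not occur.'' As stated this is circular: the non-attainment of the extremal bound is essentially equivalent to the non-centrality you are trying to prove, and nothing in Deligne's estimate rules it out. The paper's input here is Sperber's theorem \cite[Theorem 2.35]{Spe80}, which shows that each $\Frob_{q^n,x}$ acts with pairwise distinct ($p$-adic valuations of) eigenvalues, hence is regular semisimple and in particular non-central; the monodromy results of \cite{Kat88} that you gesture at do not by themselves give this pointwise statement. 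So you should replace your ``does not occur'' assertion by an appeal to Sperber (or some equivalent $p$-adic/Newton-polygon argument); without it the moment formula could a priori acquire extra terms from supercuspidal-looking central Frobenii on $\bbG_m$, and the stated value of $\omega_n$ would not be established.
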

\begin{proof}
We will apply Theorem \ref{thm_equidistribution_general_case} with $U_0 = \bbG_{m, \bbF_q}$. By a theorem of Sperber (\cite[Theorem 2.35]{Spe80}), for any $a \in U_0(\bbF_{q^n})$, the element $\Frob_{q^n, a} \in \GL(\cF_{0, \overline{\eta}})$ is regular semi-simple. In particular, it is non-central, so the only contribution to $\omega_n$ in the formula of Theorem \ref{thm_equidistribution_general_case} comes from the boundary $S_0 = \{ 0 , \infty \}$. To apply the theorem, we must first check that the value $-\chi(X) + \epsilon_0 + \epsilon_\infty$ is non-zero. We have $\chi(X) = 2$ and $\epsilon_0 = 1$, since the inertia group at $0$ acts unipotently, with the logarithm of tame inertia having a single Jordan block, by \cite[Th\'eor\`eme 7.8]{Del77}.

The key point will be the determination of the local structure of the sheaf $\cF_0$ at the point $\infty$, where it is wildly ramified. To this end, we introduce some extra notation. Let $D \subset \GL(\cF_{0, \overline{\eta}})$ denote the image of a decomposition group at infinity and $I \subset D$ the image of the inertia group. Let $\overline{D}$ denote the image of $D$ in $\PGL(\cF_{0, \overline{\eta}})$, and $\overline{I}$ the image of $I$. We then have the following facts:
\begin{itemize}
\item The group $\overline{D}$ is finite, and the Swan conductor of $D$ in its natural representation is equal to 1.
\item Let $\overline{I}_0 \supset \overline{I}_1 \supset \dots$ denote the lower ramification filtration on $\overline{I}$. Then $\overline{I}_2 = \{ 1 \}$, $\overline{I}_1$ is contained in a unique maximal torus $T$ of $\PGL(\cF_{0, \overline{\eta}})$, and $\overline{I}_1 = T \cap \overline{I}$. Moreover, $\overline{I}_0 / \overline{I}_1$ is generated by a Coxeter element (i.e.\ an $N$-cycle) in $\overline{I}_0 / \overline{I}_1 \hookrightarrow N_{\PGL}(T) / T \cong S_N$, and $\overline{I}_1$ has order $p^a$.
\end{itemize}
The finiteness of $\overline{D}$ follows from \cite[Lemma 1.11]{Kat88}. The computation of the Swan conductor is due to Deligne \cite[Th\'eor\`eme 7.8]{Del77}. The calculation of $\overline{I}$ with its ramification filtration has been carried out by Yun \cite[\S 2.3.4]{Yun13} using results of Gross--Reeder \cite{Gro10}. 

Using the formula for the valuation of the relative discriminant in terms of ramification groups \cite[Ch. IV, \S 1, Proposition 4]{Ser68}, we calculate $ \epsilon_\infty = (| \overline{I}_0 | - 1 + | \overline{I}_1 | - 1)/p^a N = 1 + (p^a - 2)/p^a N$, and so $-\chi(X) + \epsilon_0 + \epsilon_\infty = (p^a - 2)/p^a N > 0$. Similarly, let $\phi_{q, \infty} \in \pi_1(U_0, \overline{\eta})$ be a Frobenius lift at infinity and let $\overline{F}$ denote the image of $\phi_{q, \infty}$ in $\overline{D}$. Let $b$ denote the smallest value of $n$ such that $\overline{F}^n \in \overline{I}$; equivalently, $b$ is the order of the cyclic group $\overline{D} / \overline{I}$. We obtain for each $n \geq 1$:
\[ \omega_n = q^{-n/2} \frac{p^a N}{p^a - 2} \frac{\# \{ \sigma \in \overline{I} \mid \sigma \overline{F} = 1 \}}{\# \overline{I}} = \left\{ \begin{array}{ll} 0 & b \nmid n \\
\frac{q^{-n/2}}{p^a - 2} & b | n. \end{array} \right. \]
\end{proof}
\begin{remark}
\begin{enumerate}
\item The determination of the integer $b$ is equivalent to the determination of the full image $\overline{D}$ of the decomposition group at infinity, as opposed to just the image $\overline{I}$ of the inertia group which is used in the proof. We do not attempt this here. However, let us observe that a necessary condition to have $b = 1$ is that $q \equiv 1 \text{ mod }N$, and that we can always force $b = 1$ by extending the base field $\bbF_q$ of scalars (in effect, forcing $\overline{D} = \overline{I}$).
\item The fact that Frobenius acts on stalks in $U_0$ through regular semi-simple elements means that the appeal to Proposition \ref{prop_chenevier_clozel} in the proof of Theorem \ref{thm_equidistribution_general_case} can be replaced in this instance with an appeal to the Weyl character formula. This allows one to give a better bound for the convergence of the limits $\omega_n$, and to replace the condition $\xi_i \to \infty$ with the weaker assertion that $\dim_{\overline{\bbQ}_l} \xi_i \to \infty$ as $i \to \infty$ (central characters still being fixed).
\end{enumerate}
\end{remark}

\bibliographystyle{alpha}
\bibliography{frobenius}

\end{document}